\DeclareMathOperator{\cd}{cd}
\newcommand{\bdy}{\ensuremath{\partial}}
\newcommand{\iso}{\ensuremath{\cong}}
\newcommand{\Z}[1][]{\ensuremath{\mathbb{Z}_{#1}}}
\newcommand{\nsgp}[1][]{\ensuremath{\triangleleft_{#1}}}
\newcommand{\gp}[1]{\ensuremath{\langle #1\rangle}}
\newcommand{\famS}[0]{\ensuremath{\mathcal{S}}}
\newcommand{\lqt}{\backslash}
\newtheorem{theorem}{Theorem}[section]
\newtheorem{prop}[theorem]{Proposition}
\newtheorem{lem}[theorem]{Lemma}
\newtheorem{clly}[theorem]{Corollary}
\theoremstyle{definition}
\newtheorem{defn}[theorem]{Definition}
\newtheorem*{cnv}{Conventions}
\theoremstyle{remark}
\newtheorem*{rmk}{Remark}
\theoremstyle{plain}
\newcounter{introthmcount}
\theoremstyle{definition}
\newcommand{\anan}{an-annular}
\author{Gareth Wilkes}
\title{Profinite completions, cohomology and JSJ decompositions of compact 3-manifolds}
\begin{document}
\maketitle

\begin{abstract}
In this paper we extend previous results concerning the behaviour of JSJ decompositions of closed 3-manifolds with respect to the profinite completion to the case of compact 3-manifolds with boundary.

We also illustrate an alternative and perhaps more natural approach to part of the original theorem, using relative cohomology to analyse the actions of an-annular atoroidal groups on profinite trees.
\end{abstract}
\section*{Introduction}
Several recent papers have focused on those properties of 3-manifolds which can be detected via the finite quotients of the fundamental group---or equivalently via the profinite completion of the fundamental group. For example the geometry of a 3-manifold is determined by the profinite completion \cite{WZ14}, as is whether the manifold fibres over the circle \cite{Jaikin17}. There has also been much progress made towards answering the question of when two 3-manifold groups can have the same finite quotients \cite{BRW17, funar13, Wilkes16, Wilkes18}.

One of the most important tools in the study of irreducible 3-manifolds and their fundamental groups is the JSJ decomposition, a canonical graph of spaces decomposition of the 3-manifold with annuli and tori as edge spaces. The vertex spaces of such a decomposition are now known to always be geometric and therefore we have strong control over the fundamental group of a 3-manifold via its JSJ decomposition.

It is therefore interesting to study whether 3-manifolds whose fundamental groups have isomorphic profinite completions must have similar JSJ decompositions in the sense that the underlying graphs are isomorphic and corresponding vertex spaces have fundamental groups with isomorphic profinite completions. This was proved for irreducible 3-manifolds with toroidal boundary by Wilton and Zalesskii \cite[Theorem B]{WZ17}. The  principal aim of this paper is to extend this theorem to the case where the 3-manifold may have incompressible boundary of arbitrary genus. This is accomplished in Theorem \ref{JSJgeneral}.

We remark in passing that analogous theorems for prime decompositions of 3-manifolds are also known, by \cite[Theorem A]{WZ17} in the closed case and \cite[Theorem 6.22]{WilkesRelCoh} in the bounded case.

Wilton and Zalesskii remarked in \cite{WZ17} that one of the key parts of the proof of the JSJ decomposition (analysing the possible actions of the profinite completion of the fundamental group of a cusped hyperbolic 3-manifold on profinite trees) could `no doubt' be handled by developing a theory of relative cohomology of profinite groups. This theory has now been developed in \cite{WilkesRelCoh} and a secondary aim of this paper is to show that it does indeed have this application. This may be found in Theorem \ref{MainAnanThm}.

The use of relative cohomology is perhaps a more natural argument than the original analysis in \cite{WZ17}. In \cite{WZ17} cusped hyperbolic manifolds were handled by Dehn filling the cusps to obtain a closed hyperbolic manifold in such a way that the action on a profinite tree was not disturbed too much. The relative cohomology argument essentially concerns the absence of `tori' and `annuli' in the profinite completion and the obstruction this gives to splittings, which seems closer to the original spirit of a JSJ decomposition.

{\bf Acknowledgements} The author would like to thank his supervisor Marc Lackenby for reading this paper. The author was supported by the EPSRC and by Pembroke College, Oxford.
\begin{cnv} The following conventions will be in force through the paper.
\begin{itemize}
\item Generally profinite groups will be denoted with Roman letters $G$ or $H$ and discrete groups by Greek letters $\Gamma$, $\Lambda$ et cetera.
\item Maps of topological groups or modules should be assumed to be continuous homomorphisms in the appropriate sense.
\item All 3-manifolds will be compact, orientable and connected.
\item As we are interested in group-theoretic properties we will assume that 3-manifolds never have spherical boundary components. 
\item A finite graph of discrete groups will be denoted $(X,\Gamma_\bullet)$ where $X$ is a finite graph and $\Gamma_x$ for $x\in X$ denotes an edge or vertex group (with similar notation for graphs of profinite groups).
\end{itemize}
\end{cnv}

\section{Preliminaries}
\subsection{Group pairs and relative cohomology}
Here we collect various definitions and properties of group pairs and relative cohomology. As in \cite{WilkesRelCoh} one should {\it in sensu stricto} only consider families of subgroups of a profinite group which are `continuously indexed by a profinite space'. However the issues associated to this will not arise in this paper so we shall ignore it to simplify the exposition.
\begin{defn}
A profinite group pair $(G,\famS)$ consists of a profinite group $G$ and a family \famS\ of closed subgroups $S_x$ of $G$ indexed over a set $X$---that is, a function $S_\bullet$ from $X$ to the set of closed subgroups of $G$. We allow repetitions in this family (that is, the function $S_\bullet$ need not be injective). 
\end{defn}
\begin{defn}
For a group $G$ and a collection $\famS=\{S_i\}_{i\in I}$ of subgroups of $G$ indexed by a set $I$, we define $\|\famS\| = |\{i\in I : S_i\neq 1\}|\in [0,\infty]$. 
\end{defn}
\begin{defn}
Let $G$ be a profinite group and let $\famS=\{S_x\}_{x\in X}$ be a family of subgroups of $G$ indexed by a set $X$. Let $H$ be a closed subgroup of $G$. Fix a section $\sigma\colon H\lqt G\to G$ of the quotient map $G\to H\lqt G$. Define the family of subgroups 
\[\famS^H_\sigma = \left\{ H\cap \sigma(y)S_x \sigma(y)^{-1} \mid x\in X, y\in H\lqt G/S_x\right\} \]
indexed over the set 
 \[H\lqt G/\famS=\bigsqcup_{x\in X} H\lqt G/S_x \]
\end{defn}
Changing the section $\sigma$ only affects the family $\famS^H_\sigma$ by changing its members by conjugacy in $H$, and we will henceforth ignore $\sigma$. 
\begin{defn}
Let $G$ and $H$ be profinite groups, let $\famS=\{S_x\}_{x\in X}$ be a family of subgroups of $G$ indexed by a set $X$ and let ${\cal T}=\{T_y\}_{y\in Y}$ be a family of subgroups of $H$ indexed by a set $Y$. A {\em weak isomorphism of pairs} $\Phi\colon (G,\famS) \to (H, \cal T)$ is an isomorphism $\Phi\colon G\to H$ such that for some bijection $f\colon X \to Y$ each group $\Phi(S_x)$ is a conjugate in $H$ of $T_{f(x)}$.
\end{defn}
\begin{defn}
If $\Sigma$ is a collection of subgroups of the discrete group $\Gamma$ define the {\em profinite completion} of $(\Gamma, \Sigma)$ to be the group pair $(G, \famS)$ where $G= \widehat \Gamma$ and $\famS$ consists of the closure in $G$ of each group in $\Sigma$. 
\end{defn}
Note that the groups in $\famS$ may not be equal the profinite completions of the groups in $\Sigma$ in the absence of conditions on the profinite topology of $\Gamma$---specifically the condition that $\Gamma$ induces the full profinite topology on each element of $\Sigma$ in the following sense.
\begin{defn}
Let $\Gamma$ be a discrete group and let $\Lambda\leq \Gamma$. Then we say that $\Lambda$ is {\em separable} in $\Gamma$ if for every $g\in\Gamma\smallsetminus \Lambda$ there is a map $\phi$ from $\Gamma$ to a finite group such that $\phi(g)\notin \phi(\Lambda)$.

We say that {\em $\Gamma$ induces the full profinite topology on $\Lambda$} if for every finite index normal subgroup $U$ of $\Lambda$ there is a finite index normal subgroup $V$ of $\Gamma$ and with $V\cap \Lambda\leq U$.

We say that $\Lambda$ is {\em fully separable} in $\Gamma$ if it is separable in $\Gamma$ and $\Gamma$ induces the full profinite topology on $\Lambda$.
\end{defn}
In our case we shall only be considering families of subgroups resulting from incompressible boundary components of 3-manifolds. There are no problems with the profinite topology on these subgroups because of the following theorem, which builds on work of Przytycki and Wise \cite{PW14} among others.
\begin{theorem}[Corollary 6.20 of \cite{WilkesRelCoh}]\label{FullSepBdy}
Let $M$ be a compact 3-manifold with $\pi_1$-injective boundary and let $L$ be a boundary component of $M$. Then $\pi_1 L$ is fully separable in $\pi_1 M$. 
\end{theorem}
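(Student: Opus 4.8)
The plan is to combine the separability of essential embedded surfaces (Przytycki--Wise) with the JSJ decomposition, reducing the assertion to geometric pieces, where peripheral subgroups turn out to be virtual retracts and hence automatically fully separable. Before doing anything geometric I would isolate two soft lemmas. \emph{(Composition.)} If $\Delta$ is fully separable in $\Gamma$ and $\Lambda$ is fully separable in $\Delta$, then $\Lambda$ is fully separable in $\Gamma$: separability is checked by treating the cases $g\notin\Delta$ and $g\in\Delta\smallsetminus\Lambda$ separately, in the second case pushing a finite quotient of $\Delta$ that separates $g$ from $\Lambda$ up to a finite quotient of $\Gamma$ using that $\Gamma$ induces the full topology on $\Delta$; the topology condition follows by nesting the witnessing finite-index subgroups. \emph{(Virtual retracts.)} If $\Lambda\leq\Gamma'\leq\Gamma$ with $[\Gamma:\Gamma']<\infty$, with $\Gamma'$ retracting onto $\Lambda$, and $\Gamma$ residually finite, then $\Lambda$ is fully separable in $\Gamma$: the retraction splits $\widehat\Lambda\to\widehat{\Gamma'}$, so $\Lambda$ is separable in $\Gamma'$ and $\Gamma'$ induces the full topology on $\Lambda$, and then one applies the composition lemma to $\Lambda\leq\Gamma'\leq\Gamma$.

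Next I would reduce to the irreducible case. In the prime decomposition $M=M_1\#\cdots\# M_k$, incompressibility of $\partial M$ forces each $\partial M_i$ to be incompressible in $M_i$, and $L$ lies in a single summand $M_v$, which is then irreducible with incompressible boundary. Since $\pi_1 M_v$ is a free factor, hence a retract, of $\pi_1 M$, the composition lemma reduces the problem to showing that $\pi_1 L$ is fully separable in $\pi_1 M_v$.

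Now I would pass to the JSJ decomposition of the irreducible manifold $M_v$ along tori and annuli, writing $\pi_1 M_v$ as the fundamental group of a finite graph of groups with $\mathbb Z$ or $\mathbb Z^2$ edge groups and vertex groups the fundamental groups of geometric pieces. The crucial input here is that this graph of groups is \emph{efficient} with respect to the profinite topology---in particular that $\pi_1 M_v$ is separable on, and induces the full profinite topology on, each vertex and each edge group---which rests on Przytycki--Wise separability of the essential edge surfaces together with the refinements for manifolds with boundary. The component $L$ sits as a peripheral surface inside a single geometric piece $M_w$, so by the composition lemma the problem reduces once more to showing that $\pi_1 L$ is fully separable in $\pi_1 M_w$.

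Finally I would dispatch the geometric cases. If $M_w$ is an $I$-bundle over a surface, then $L$ is either a horizontal boundary component, so $\pi_1 L$ has finite index in $\pi_1 M_w$, or comes from the vertical boundary, so $\pi_1 L$ is a geometrically finite subgroup of the surface group $\pi_1 M_w$ and hence a virtual retract. If $M_w$ is Seifert fibered then $L$ is a torus, $\pi_1 M_w$ is virtually $F\times\mathbb Z$, and again $\pi_1 L$ is a virtual retract. If $M_w$ is finite-volume hyperbolic (with cusps and/or totally geodesic boundary) then $\pi_1 M_w$ is virtually special and relatively hyperbolic relative to the cusps, and $\pi_1 L$ is relatively quasiconvex, hence a virtual retract. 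In each case the virtual-retract lemma concludes the argument. The hard part will be the efficiency statement for the JSJ graph of groups of a bounded $3$-manifold---especially that $\pi_1 M_v$ induces the full profinite topology on the torus and annulus edge groups (and on the vertex groups)---which is the real technical core; by comparison the geometric-piece analysis is routine once virtual specialness, respectively the $F\times\mathbb Z$ structure, is available.
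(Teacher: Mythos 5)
The paper gives no proof of this statement---it is imported verbatim as Corollary 6.20 of \cite{WilkesRelCoh}---so your argument can only be judged on its own terms. Your two soft lemmas (transitivity of full separability, and full separability of virtual retracts in a residually finite group) are correct, the reduction to an irreducible summand via free-factor retractions is fine, and the analysis of the individual geometric pieces is standard given virtual specialness. The argument breaks, however, at the assertion that ``the component $L$ sits as a peripheral surface inside a single geometric piece $M_w$.'' For a higher-genus boundary component this is false: the JSJ decomposition of a bounded irreducible manifold cuts along essential \emph{annuli} as well as tori, the boundary circles of those annuli lie on $\partial M$, and they cut $L$ into subsurfaces distributed among several JSJ pieces. (Concretely: glue two hyperbolic manifolds with totally geodesic genus-two boundary along regular neighbourhoods of essential curves in their boundaries; the resulting higher-genus boundary component meets both vertex pieces and $\pi_1 L$ is not conjugate into either vertex group. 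Similarly, a Seifert fibred or $I$-bundle piece meets $\partial M$ only in annuli or in horizontal boundary, never in all of $L$.) Your composition lemma therefore has nothing to compose with, and the reduction to geometric pieces collapses for precisely the boundary components that make the theorem nontrivial.

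To repair this along your lines you would have to treat $\pi_1 L$ as the fundamental group of the sub-graph-of-groups that the JSJ annuli induce on $L$ and prove closedness and full-topology statements for such subgroups, which is substantially more delicate than for vertex groups. The standard route---and the one this paper's own bookkeeping suggests---goes the other way: double $M$ along its higher-genus boundary (or along $L$ alone) to obtain a manifold with toroidal boundary in which $\pi_1 L$ appears as an \emph{edge} group of a two-vertex splitting; efficiency of that splitting, available from the closed/toroidal technology of Wilton--Zalesskii and Przytycki--Wise, yields exactly the separability of $\pi_1 L$ and the induced full profinite topology, and one then restricts along the retraction $\pi_1(D'M)\to\pi_1 M$. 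Note finally that in this paper the efficiency of the bounded JSJ decomposition is itself \emph{deduced} from Theorem \ref{FullSepBdy}, so taking that efficiency as an input, as you do, risks circularity unless you establish it independently.
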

The primary tool we shall use to study profinite group pairs is their relative cohomology. There is no need for us to define this theory here as we shall only need properties as black boxes. Suffice it to say that for profinite group pairs there is a theory of relative (co)homology which has the aspects one might expect, viz:
\begin{itemize}
\item a long exact sequence of relative cohomology \cite[Proposition 2.4]{WilkesRelCoh};
\item functoriality with respect to sensible maps of group pairs \cite[Proposition 2.6]{WilkesRelCoh};
\item invariance under replacing subgroups by conjugates \cite[Proposition 2.9]{WilkesRelCoh};
\item a theory of cohomological dimension $\cd_p(G,\famS)$ with respect to a prime $p$ \cite[Section 2.3]{WilkesRelCoh};
\item a notion of cup product \cite[Section 3]{WilkesRelCoh}; and
\item a notion of Poincar\'e duality (or PD$^n$) pair (with respect to a set of prime numbers) \cite[Section 5]{WilkesRelCoh}.
\end{itemize} 
The relevant results we shall primarily be using are the following.
\begin{theorem}[Theorem 6.21 of \cite{WilkesRelCoh}]\label{ProfPD3}
Let $M$ be a compact aspherical 3-manifold with incompressible boundary components $\bdy M_1,\ldots \bdy M_r$. Let $\Gamma = \pi_1 M$ and let $\Sigma= \{\pi_1 \bdy M_i\}_{1\leq i\leq r}$. Then the profinite completion of $(\Gamma, \Sigma)$ is a PD$^3$ pair at every prime $p$.
\end{theorem}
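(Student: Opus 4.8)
The plan is to deduce the profinite statement from the classical one by a base‑change argument along the completion map. Since $M$ is aspherical and each boundary component $\bdy M_i$ is incompressible, $\bdy M_i$ is itself aspherical (a closed surface of genus at least one, by our conventions), so $(M,\bdy M)$ is a Poincar\'e pair of dimension $3$; as $M$ is orientable the dualizing module is trivial, hence the discrete group pair $(\Gamma,\Sigma)$ is a $\mathrm{PD}^3$ pair in the sense of Bieri--Eckmann, so that $H^k(\Gamma,\Sigma;\Z\Gamma)$ is $0$ for $k\ne 3$ and $\Z$ (with trivial action) for $k=3$. Before transferring this I would record three inputs. First, $M$ is a finite $K(\Gamma,1)$, so $\Gamma$ is of type $\mathrm{FP}$ over $\Z$, as is the fundamental group of every finite cover; combined with goodness this makes $\widehat\Gamma$ of type \pFP[\infty]. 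Second, $\Gamma$ is a good group and, more, $(\Gamma,\Sigma)$ is a \emph{good pair}, meaning the relative cohomology of $(\Gamma,\Sigma)$ with finite coefficients agrees with the continuous relative cohomology of its profinite completion---this rests on the goodness of $3$-manifold groups and of closed surface groups together with the five lemma applied to the long exact sequence of the pair. Third, by Theorem \ref{FullSepBdy} each $\pi_1\bdy M_i$ is fully separable in $\Gamma$, so its closure $S_i$ in $\widehat\Gamma$ is canonically $\widehat{\pi_1\bdy M_i}$, the natural map $\widehat{\pi_1\bdy M_i}\to\widehat\Gamma$ is injective, and for an open subgroup $U\le\widehat\Gamma$ the restricted family $\famS^U$ is the (completed) peripheral family of the corresponding finite cover $M_U$, again a compact orientable aspherical $3$-manifold with incompressible boundary.

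With these in hand I would check, for each prime $p$, the defining conditions of a $\mathrm{PD}^3$ pair at $p$ for $(\widehat\Gamma,\famS)$. That $\cd_p(\widehat\Gamma,\famS)=3$: the bound $\le 3$ follows from relative goodness, since the $3$-dimensional pair $(M,\bdy M)$ has vanishing relative cohomology above degree $3$ with finite coefficients; and $\cd_p(\widehat\Gamma,\famS)\ge 3$ because $H^3(\widehat\Gamma,\famS;\F_p)\cong H^3(M,\bdy M;\F_p)\cong\F_p\ne 0$ by orientability. The heart of the matter is the dualizing computation $H^k_{\mathrm{cont}}(\widehat\Gamma,\famS;\Zpof{\widehat\Gamma})=0$ for $k\ne 3$ and $\cong\Z[p]$ for $k=3$. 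I would establish this by showing that, for a group of type $\mathrm{FP}$ which is good, the completed group ring $\Zpof{\widehat\Gamma}$ behaves towards relative cohomology like a flat base change of $\Z\Gamma$: take an appropriate finite relative resolution by finitely generated projective $\Z\Gamma$-modules, apply $-\hotimes[\Z\Gamma]\Zpof{\widehat\Gamma}$, and observe that the result is again a resolution because the homology of the base-changed complex is controlled by $H_*(\Gamma;\Zpof{\widehat\Gamma})$ (and its analogues for the boundary subgroups), which one identifies---via Shapiro's lemma, goodness of the finite covers, and the fact that $\widehat\Gamma$ is \pFP[\infty] (used to commute continuous homology past the inverse limit $\Zpof{\widehat\Gamma}=\varprojlim_{U,n}(\Z/p^n)[\widehat\Gamma/U]$)---with $H^{\mathrm{cont}}_*(\widehat\Gamma;\Zpof{\widehat\Gamma})$, and this is $\Z[p]$ in degree $0$ and $0$ above, because $\Zpof{\widehat\Gamma}$ is free over itself. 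Dualizing then yields $H^k_{\mathrm{cont}}(\widehat\Gamma,\famS;\Zpof{\widehat\Gamma})\cong H^k(\Gamma,\Sigma;\Z\Gamma)\otimes_{\Z\Gamma}\Zpof{\widehat\Gamma}$, which is $0$ for $k\ne 3$ and $\Z[p]$ for $k=3$ by the discrete $\mathrm{PD}^3$ property. Finally, cap product with the image in $H_3(\widehat\Gamma,\famS;\Z[p])$ of the fundamental class of $M$ gives the duality isomorphisms for all coefficient modules, by the standard characterisation of $\mathrm{PD}^n$ pairs via their group-ring cohomology; triviality of the dualizing module makes this work uniformly at every prime, including $p=2$.

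The main obstacle is making this base-change principle precise in the \emph{relative} setting: one must set up relative projective resolutions carefully, verify that restricting to $\famS^U$ in a finite cover really does reproduce the peripheral family of $M_U$ (this is exactly where full separability of the boundary is used), and check that continuous relative cohomology commutes with the double inverse limit defining $\Zpof{\widehat\Gamma}$---in short, one must upgrade "goodness for finite modules" to a flat base change to the completed group ring along the whole relative cochain complex, including the maps coming from the boundary subgroups. The remaining ingredients---the discrete $\mathrm{PD}^3$ property, orientability, the finiteness properties, and full separability---are classical or already supplied. As a consistency check, one may note that the double $DM=M\cup_{\bdy M}M$ is a closed orientable aspherical $3$-manifold, so $\widehat{\pi_1 DM}$ is a profinite $\mathrm{PD}^3$ group, inside which $\widehat\Gamma$ sits with peripheral structure $\famS$; the statement to be proved is the profinite shadow of the classical fact that a $\mathrm{PD}^3$ pair doubles to a $\mathrm{PD}^3$ group.
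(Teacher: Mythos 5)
The paper does not actually prove this statement: it is imported as a black box, being Theorem~6.21 of \cite{WilkesRelCoh}, and nothing in the present paper depends on the internals of its proof. Your sketch is, in outline, a faithful reconstruction of the argument given in that reference: the Bieri--Eckmann PD$^3$ pair structure on $(\Gamma,\Sigma)$ from Poincar\'e--Lefschetz duality for the aspherical pair $(M,\bdy M)$; type FP and cohomological goodness of $\Gamma$ and of the surface subgroups together with full separability of the boundary (Theorem~\ref{FullSepBdy}) to make the completed pair good and of type \pFP[\infty]; and then the computation of the dualizing data. The one place where your wording obscures the real work is the step ``dualizing then yields $H^k(\widehat\Gamma,\famS;\Zpof{\widehat\Gamma})\cong H^k(\Gamma,\Sigma;\Z\Gamma)\otimes_{\Z\Gamma}\Zpof{\widehat\Gamma}$'' presented as a flat base change: $\Zpof{\widehat\Gamma}$ is not flat over $\Z\Gamma$, and this isomorphism is essentially the content of the theorem rather than a formal consequence. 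The honest mechanism is the one you mention only parenthetically: write $\Zpof{\widehat\Gamma}$ as the inverse limit of the finite modules $(\Z/p^n)[\widehat\Gamma/U]$, use \pFP[\infty] to pull the limit outside cohomology, apply the relative Shapiro lemma (this is precisely where full separability is needed, to identify $\famS^U$ with the peripheral family of the finite cover $M_U$), invoke goodness of the pair to replace the continuous cohomology of $(U,\famS^U)$ by $H^k(M_U,\bdy M_U;\Z/p^n)$, and then apply Poincar\'e--Lefschetz duality in each $M_U$. The vanishing below the top degree is then not automatic: one must identify the maps of the resulting inverse system of $H_{3-k}(M_U;\Z/p^n)$ (they are corestriction-type maps, e.g.\ multiplication by the covering degree in degree $0$) and show the limit is pro-trivial. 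If you replace the base-change language by that explicit limit computation, the remaining ingredients --- orientability giving a trivial dualizing module uniformly in $p$, and cap product with the fundamental class furnishing the duality isomorphisms --- go through as you state, and your outline matches the proof the paper is citing.
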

\begin{theorem}[Corollary 5.14 of \cite{WilkesRelCoh}]\label{FixedVertex}
Let $(G,\famS)$ be a profinite group pair which is a PD$^n$ pair at every prime $p$. Suppose that $G$ acts on a profinite tree $T$. Suppose that for every edge $e$ of $T$ we have $\cd_p(G_e, \famS^{G_e})<n-1$ for all $p$ where $G_e$ denotes the stabiliser of $e$. Then $G$ fixes a vertex of $T$.
\end{theorem}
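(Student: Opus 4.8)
The plan is to study the action of $G$ on $T$ via a Mayer--Vietoris argument in relative cohomology: the hypothesis on edge stabilisers will ensure that, in cohomological degree $n$, only the vertex stabilisers contribute, and then Poincar\'e duality will force one vertex stabiliser to attain the full cohomological dimension $n$ --- whereupon a Strebel-type theorem makes it open in $G$ and a global fixed point is essentially automatic. Fix an arbitrary prime $p$; we will only use the PD$^n$ hypothesis and the bound on edge stabilisers at this single prime, and all modules and (co)homology below will have $\F_p$-coefficients. After passing to the barycentric subdivision of $T$ we may also assume that $G$ acts without inversions, noting that this only replaces each edge stabiliser by a subgroup of itself, so the hypothesis $\cd_p(G_e,\famS^{G_e})<n-1$ survives.

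First I would recall the short exact sequence of $\F_p[\![G]\!]$-modules attached to the profinite tree $T$,
\[ 0 \longrightarrow \F_p[\![E^\ast(T)]\!] \longrightarrow \F_p[\![V(T)]\!] \longrightarrow \F_p \longrightarrow 0, \]
in which $\F_p[\![V(T)]\!]$ is the free profinite $\F_p$-module on the vertex space and $\F_p[\![E^\ast(T)]\!]$ is the corresponding (pointed) edge module; as $G$-modules these decompose, over a continuous choice of orbit representatives, into the modules $\bind_{G_v}^{G}\F_p$ and $\bind_{G_e}^{G}\F_p$ respectively. All three terms are free as $\F_p$-modules, so the sequence stays exact after applying $-\otimes_{\F_p}M$ for any finite $\F_p[\![G]\!]$-module $M$.

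Now let $M$ be the mod-$p$ dualising module of the PD$^n$ pair $(G,\famS)$, so that Poincar\'e duality gives $H^n(G,\famS;M)\neq 0$. Tensoring the sequence above with $M$ (diagonal action) and applying $H^\bullet(G,\famS;-)$ yields a long exact sequence in which, by the Shapiro lemma for relative cohomology, the terms involving $\F_p[\![E^\ast(T)]\!]\otimes_{\F_p}M$ are assembled from the groups $H^\bullet(G_e,\famS^{G_e};M)$; the hypothesis $\cd_p(G_e,\famS^{G_e})<n-1$ annihilates all of these in degrees $n$ and $n+1$, so the connecting maps force an isomorphism $H^n(G,\famS;\F_p[\![V(T)]\!]\otimes_{\F_p}M)\cong H^n(G,\famS;M)$. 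The left-hand side is in turn assembled, again by relative Shapiro, from the groups $H^n(G_v,\famS^{G_v};M)$ as $v$ runs over the vertex orbits; since it is non-zero, some vertex $v_0$ has $H^n(G_{v_0},\famS^{G_{v_0}};M)\neq 0$, whence $\cd_p(G_{v_0},\famS^{G_{v_0}})\geq n$. A Shapiro argument in the other direction shows that $\cd_p$ of a sub-pair of $(G,\famS)$ cannot exceed $\cd_p(G,\famS)=n$, so in fact $\cd_p(G_{v_0},\famS^{G_{v_0}})=n$.

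To finish I would invoke the profinite analogue of Strebel's theorem for PD$^n$ pairs: a closed subgroup whose induced pair attains cohomological dimension $n$ at $p$ must be open. Hence $G_{v_0}$ is open in $G$, the orbit $Gv_0$ is finite, and $N=\bigcap_{w\in Gv_0}G_w$ is an open normal subgroup of $G$ fixing every vertex of $Gv_0$. The fixed-point set $T^N$ is then a non-empty $G$-invariant profinite subtree on which $G$ acts through the finite quotient $G/N$, and since a finite group acting on a profinite tree fixes a vertex, $G$ fixes a vertex of $T^N\subseteq T$, as required. I expect the difficulties to be bookkeeping rather than conceptual: setting up the profinite-tree Mayer--Vietoris sequence so that the induced boundary families $\famS^{G_v},\famS^{G_e}$ are correctly carried through the Shapiro isomorphism, and pinning down the Strebel-type statement --- this last being the point at which the Poincar\'e duality hypothesis does its second piece of work, and probably the single most delicate ingredient.
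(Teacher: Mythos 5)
The paper does not actually prove this statement: it is imported wholesale as Corollary 5.14 of \cite{WilkesRelCoh}, so I can only measure your argument against the source's. Your overall architecture --- the exact sequence of the profinite tree, a relative Shapiro decomposition into vertex- and edge-stabiliser pairs, Poincar\'e duality in degree $n$, a Strebel-type openness theorem, and the fixed-point theorem for finite groups acting on profinite trees --- is the right one. But there is a genuine error in the central Mayer--Vietoris step, and it is detectable without opening \cite{WilkesRelCoh}: as written, your long exact sequence only needs the edge terms to vanish in degrees $n$ and $n+1$, i.e.\ only $\cd_p(G_e,\famS^{G_e})\leq n-1$, and under that weaker hypothesis the theorem is false. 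Take $G=\widehat{\pi_1\Sigma}$ for $\Sigma$ a closed orientable surface of genus $2$: this is a PD$^2$ pair at every prime (with $\famS$ empty), it acts on the profinite Bass--Serre tree of the efficient splitting $\pi_1\Sigma=F_2*_{\Z}F_2$ with edge stabilisers conjugate to $\Zhat$, so $\cd_p(G_e)=1=n-1$ for all $p$, and it fixes no vertex (a vertex stabiliser is conjugate to $\widehat{F_2}$, which has $\cd_p=1<2$). So your degree bookkeeping cannot be correct.

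The source of the problem is the variance of the Shapiro lemma. Relative cohomology $H^\bullet(G,\famS;-)$ is defined for \emph{discrete} torsion coefficients, whereas $\F_p[\![V(T)]\!]\otimes_{\F_p}M$ is a profinite module when $T$ is infinite; moreover cohomological Shapiro holds for \emph{coinduced}, not induced, modules, and for closed subgroups of infinite index these differ. The repair is to apply $\Hom_{\F_p}(-,M)$ to the tree sequence, obtaining a short exact sequence of discrete modules
\[0\longrightarrow M\longrightarrow C(V(T),M)\longrightarrow C(E^*(T),M)\longrightarrow 0,\]
whose associated long exact sequence shows that the obstruction to $H^n(G,\famS;M)$ injecting into $H^n(G,\famS;C(V(T),M))$ is $H^{n-1}(G,\famS;C(E^*(T),M))$, assembled from the groups $H^{n-1}(G_e,\famS^{G_e};M)$. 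It is exactly here that the full hypothesis $\cd_p(G_e,\famS^{G_e})<n-1$ is used, and one sees in the surface example that it is sharp. With this correction your remaining steps (extracting a single vertex orbit with $H^n(G_{v_0},\famS^{G_{v_0}};M)\neq 0$ from the continuously indexed product, the Strebel-type openness statement for subgroup pairs of full dimension, and the finite-group fixed-point theorem applied to $T^N$) are sound in outline, though the first of these requires the continuously-indexed-family formalism that you rightly flag as the bookkeeping burden.
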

\begin{lem}[Lemma 2.20 of \cite{WilkesRelCoh}]\label{LemReductionOfFamily}
Let $(G,\famS)$ be a profinite group pair. Suppose that the family $\famS$ is such that at most one subgroup $S_0$ is non-trivial. Then for every $k> 1$ and every discrete torsion $G$-module $A$, we have 
\[H^k(G,\famS; A)=H^k(G, \{S_0\};A)  \]
In particular, $\cd_p(G,\famS)=\cd_p(G,\{S_0\})$ for all $p\in \pi$ except possibly if one  dimension is 1 and the other 0.
\end{lem}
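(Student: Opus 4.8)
The plan is to deduce the lemma by comparing the long exact sequences of relative cohomology \cite[Proposition 2.4]{WilkesRelCoh} of the two pairs $(G,\famS)$ and $(G,\{S_0\})$. Write $\famS=\{S_x\}_{x\in X}$ with $S_x=1$ for all $x\in X\smallsetminus\{0\}$. Since every $S_x$ is contained in $S_0$ (trivially so when $x\neq 0$), the constant map $X\to\{0\}$ together with $\id_G$ is a morphism of group pairs $(G,\famS)\to(G,\{S_0\})$; by functoriality \cite[Proposition 2.6]{WilkesRelCoh} it induces, for each $k$, a homomorphism $\theta_k\colon H^k(G,\{S_0\};A)\to H^k(G,\famS;A)$, and these fit into a ladder between the two long exact sequences in which the maps on the ``absolute'' terms $H^k(G;A)$ are the identity.

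The one computation that is needed is the behaviour of $\theta$ on the ``boundary'' terms. By Shapiro's lemma the boundary term of a pair $(G,\mathcal T)$ in degree $k$ is $\prod_y H^k(T_y;A)$, and $\theta$ induces on it the product of the restriction maps $H^k(S_0;A)\to H^k(S_x;A)$. For $k\geq 1$ we have $H^k(1;A)=0$, so every trivial member of $\famS$ contributes nothing; thus the boundary term of $(G,\famS)$ in degree $k\geq 1$ is just $H^k(S_0;A)$, which is the boundary term of $(G,\{S_0\})$, and $\theta$ induces the identity between them. Hence for every $k\geq 1$ the comparison map on boundary terms is an isomorphism. (Whether one uses products or direct sums here is immaterial, since all but one of the $S_x$ is trivial and $H^0(1;A)$ is the only non-zero contribution from the trivial groups.)

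Feeding this into the five lemma, for $k\geq 2$ the relevant five-term segment of the ladder is
\[ H^{k-1}(G;A)\longrightarrow \prod_x H^{k-1}(S_x;A)\longrightarrow H^k(G,\famS;A)\longrightarrow H^k(G;A)\longrightarrow \prod_x H^k(S_x;A), \]
mapped to from the corresponding segment for $(G,\{S_0\})$, and the vertical maps at the four outer positions are isomorphisms---the ``absolute'' ones trivially, the two ``boundary'' ones by the previous paragraph, using $k-1\geq 1$. Therefore $\theta_k$ is an isomorphism for every $k\geq 2$ and every discrete torsion module $A$, which is the displayed identity of the lemma. For the statement on cohomological dimension, recall that $\cd_p$ of a pair is the largest degree in which the relative cohomology with $p$-primary coefficients is non-zero; since the two relative cohomologies coincide in all degrees $\geq 2$, the values $\cd_p(G,\famS)$ and $\cd_p(G,\{S_0\})$ can differ only on account of degree $1$, and such a discrepancy forces one of them to be $1$ and the other $0$.

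The argument is essentially bookkeeping, and the only genuine subtlety is the identification of the degree-$k$ boundary term with a product over the (possibly infinite) index set when the family is infinite---precisely the ``continuously indexed over a profinite space'' issue flagged at the start of the section. As remarked there, it causes no difficulty here, because all but one of the subgroups is trivial, so in each positive degree the product is effectively a single factor. (Alternatively, one may run the comparison at the level of the relative complexes: the complex computing $H^\ast(G,\famS;A)$ differs from the one computing $H^\ast(G,\{S_0\};A)$ by a summand induced from the trivial subgroups, which is free over the completed group algebra and hence $G$-acyclic in positive degrees, so the two complexes have the same cohomology in all degrees $\geq 2$.)
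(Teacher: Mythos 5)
Your argument is correct: the comparison of the two long exact sequences via the map of pairs $(G,\famS)\to(G,\{S_0\})$, the observation that $H^k(1;A)=0$ for $k\geq 1$ so the boundary terms agree in positive degrees, and the five lemma in degrees $k\geq 2$ give exactly the stated isomorphism, and the $\cd_p$ statement follows since any discrepancy is confined to degrees $\leq 1$. The paper itself imports this as Lemma 2.20 of \cite{WilkesRelCoh} without proof, and your argument (including the parenthetical reformulation via the free, hence acyclic, summand of $\Zof{G/\famS}$ coming from the trivial members) is the same one given there.
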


\subsection{JSJ decompositions of 3-manifolds}\label{subsecJSJbackgd}
\begin{defn}
A 3-manifold $M$ is {\em atoroidal} if any embedded incompressible torus is isotopic to a boundary component of $M$. Similarly $M$ is {\em \anan} if any properly embedded incompressible annulus is isotopic into a boundary component of $M$.
\end{defn}

The JSJ decomposition of a closed irreducible 3-manifold \cite{JS78, Joh79} consists of a canonical collection of disjoint incompressible tori embedded in the manifold such that, on removing small open neighbourhoods of these tori, the connected connected components of the remainder are either Seifert fibred or atoroidal.

In this paper we shall be considering the analogous decomposition for a compact irreducible 3-manifold $M$ with incompressible boundary. Here the decomposition consists of cutting along both annuli and tori embedded in the manifold. Our source for this decomposition is \cite{NS97}. The classification of the pieces of this decomposition is not quite so clean as in the closed case.  
\begin{defn}
Let $(X, M_\bullet)$ be a graph-of-spaces decomposition of a 3-manifold $M$ whose edge spaces are annuli and tori. For a vertex space $M_x$ let $\bdy_0 M_x$ be the part of $\bdy M_x$ coming from edge spaces $M_e$ and let $\bdy_1 M_x$ be the portion of $\bdy M_x$ coming from $\bdy M$. Then $M_x$ is {\em simple} if any essential annulus $(A, \bdy A)\subseteq (M_x, \bdy_0 M_x)$ is parallel to $\bdy_1 M_x$.
\end{defn}
\begin{theorem}[see Section 3 of \cite{NS97}]
	Let $M$ be a compact irreducible 3-manifold with incompressible boundary. There exists a minimal collection of essential disjoint annuli and tori (the {\em JSJ} annuli and tori) properly embedded in $M$ such that the complement of a regular neighbourhood of the union of these surfaces consists of simple atoroidal manifolds, Seifert fibred manifolds, and I-bundles. This collection is unique up to isotopy. 
\end{theorem}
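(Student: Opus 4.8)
The plan is to identify this collection with the frontier of the \emph{characteristic submanifold} of $M$, and to extract existence, minimality and uniqueness from the Jaco--Shalen--Johannson theory of characteristic submanifolds \cite{JS78,Joh79}, in the form developed for manifolds with boundary by Neumann--Swarup \cite{NS97}. Throughout, an annulus or torus is \emph{essential} if it is properly embedded, incompressible (and $\partial$-incompressible, in the annulus case) and not isotopic into $\bdy M$. First I would dispose of the degenerate cases: if $M$ is itself Seifert fibred, or an $I$-bundle, or one of the finitely many small exceptional manifolds (for instance $T^2\times I$ or the twisted $I$-bundle over the Klein bottle), then the empty collection, or an evident one-surface collection, does the job and uniqueness is checked directly. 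So assume henceforth $M$ is none of these.

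\textbf{Existence.} By the Kneser--Haken finiteness theorem there is a bound, depending only on a triangulation of $M$, on the number of pairwise disjoint and pairwise non-parallel essential surfaces in $M$; in particular there is such a bound for essential annuli and tori. Hence one may choose a collection $\famS$ of disjoint essential annuli and tori, no two parallel, that is maximal for inclusion. Cut $M$ along a regular neighbourhood of $\bigcup\famS$ to obtain pieces $N_1,\dots,N_k$. I claim each $N_j$ is simple atoroidal, Seifert fibred, or an $I$-bundle. Suppose $N_j$ is not simple, so it contains an essential annulus $(A,\bdy A)\subseteq(N_j,\bdy_0 N_j)$ not parallel to $\bdy_1 N_j$; maximality of $\famS$ prevents enlarging the collection, which forces the essential annuli of $N_j$ to form a ``saturating'' family, and the annulus theorem together with the classification of Haken manifolds carrying such families (and of $I$-bundles and small Seifert pieces) then identifies $N_j$ as an $I$-bundle or a Seifert fibred space. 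The torus case runs similarly, via the torus theorem: an essential torus not parallel to $\bdy_1 N_j$ forces $N_j$ to be Seifert fibred. This structural step is where essentially all the work lies.

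\textbf{Minimality and uniqueness.} Let $W$ be the union of a regular neighbourhood of $\bigcup\famS$ with those $N_j$ that are $I$-bundles or Seifert fibred; after absorbing parallelisms and discarding inessential pieces one arranges that $W$ is a characteristic submanifold — perfectly embedded, full, with $\mathrm{Fr}(W)$ a sub-collection $\famS'\subseteq\famS$ of essential annuli and tori and with each component of $M\smallsetminus W$ simple atoroidal. For minimality, deleting a component $S$ of $\famS'$ glues a simple piece to a Seifert or $I$-bundle piece (or two fibred pieces along mismatched fibres), and such a union is neither simple atoroidal, nor Seifert fibred, nor an $I$-bundle, so no proper subcollection suffices. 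For uniqueness the crucial input is the \emph{enclosing property}: every essential annulus or torus in $M$ can be isotoped into $W$. One proves it by surgering the intersection with $\mathrm{Fr}(W)$ along innermost discs and outermost arcs, using incompressibility and the fact that essential annuli and tori inside $I$-bundles and Seifert fibred spaces are, up to isotopy, vertical or horizontal. Given any second valid collection $\famS_1$, push its surfaces into $W$ by the enclosing property; inside each fibred component of $W$ such a surface is isotopic either to a fibred one or into $\mathrm{Fr}(W)$, while a simple complementary piece of $\famS_1$ meets $W$ only in collar regions. A bookkeeping argument, together with Waldhausen's theorem that homotopic incompressible surfaces in a Haken manifold are isotopic, then shows $\famS_1$ is isotopic to $\famS'$.

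\textbf{Expected main obstacle.} The heart of the argument is the structural step in Existence: proving that a complementary piece which fails to be simple is necessarily an $I$-bundle or a Seifert fibred space. This is precisely the characteristic submanifold theorem, and depends on the annulus and torus theorems together with a delicate case analysis — complicated by genuine exceptional pieces such as small Seifert fibred spaces and the Klein-bottle $I$-bundle — of how families of essential annuli can sit inside a Haken $3$-manifold. Establishing the enclosing property for all pieces, including those exceptional ones, is the other substantial ingredient.
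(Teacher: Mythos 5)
The paper offers no proof of this statement: it is imported as background from Section~3 of Neumann--Swarup \cite{NS97}, and your outline follows precisely the route taken there and in the underlying Jaco--Shalen--Johannson theory (characteristic submanifold, annulus and torus theorems, enclosing property), so you are reproducing the cited argument rather than offering an alternative. As a proof it remains a sketch: the two decisive ingredients --- that a complementary piece failing to be simple must be an $I$-bundle or Seifert fibred, and the enclosing property --- are invoked rather than established, as you yourself acknowledge. The thinnest point is the final uniqueness step: deducing that an arbitrary minimal collection agrees, up to isotopy, with the frontier of the characteristic submanifold requires the careful conventions of \cite{NS97} about which fibred pieces are retained (adjacent Seifert or $I$-bundle pieces glued along annuli can admit competing minimal systems), and your ``bookkeeping argument'' is where that real work would have to go.
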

We refer to the graph of spaces so obtained, whose edge spaces are the annuli and tori and whose vertex spaces are closures in $M$ of components of the complement of a regular neighbourhood of the union of these surfaces, as the {\em JSJ decomposition} of $M$. 

Note that the simple atoroidal pieces may not be an-annular: they may contain essential annuli whose boundaries run over the JSJ annuli of $M$. 

Let $D'M$ be the manifold obtained from two copies of $M$ by identifying the copies of each boundary component of $M$ which is not a torus. We say we have `doubled $M$ along its higher-genus boundary'. We use the symbol $D'M$ rather than $DM$ to remind the reader that we have not necessarily doubled along all boundary components of $M$, but only the higher-genus ones. Then $D'M$ has toroidal boundary and has a JSJ decomposition along only tori. These tori are either copies of the JSJ tori of $M$ or are the doubles of JSJ annuli of $M$ along their boundary curves. Notice that the obvious folding map $D'M\to M$ carries JSJ pieces of $D'M$ to JSJ pieces of $M$.

The JSJ decomposition of a compact irreducible 3-manifold $M$ with incompressible boundary induces a graph-of-groups decomposition of its fundamental group whose edge groups are abelian and whose vertex groups are the fundamental groups of the corresponding vertex spaces. 

By the comments above, the obvious retraction $\rho\colon \pi_1 (D'M) \to \pi_1 (M)$ induces a map of graphs of groups. More precisely let the graph of groups corresponding to the JSJ decomposition of $D'M$ be $(X',\Gamma'_\bullet)$. Let $\Z/2 = \gp{\tau}$ act on $D'M$ by swapping the two copies of $M$. This action descends to an action on $\pi_1 (D'M)$ and, by uniqueness of JSJ decompositions, to an action of $\tau$ on $X'$ such that $\Gamma'_{\tau\cdot x} = \tau\cdot\Gamma'_x$ for all $x\in X'$. Then the graph of groups for the JSJ decomposition of $M$ is $(X'/\gp{\tau}, \rho(\Gamma'_\bullet))$ where the group corresponding to a point $\{x, \tau\cdot x\}$ of $X'/\gp{\tau}$ is $\rho(\Gamma'_x) = \rho(\tau\cdot\Gamma'_x)$. Note that there is also a section $\iota\colon X\to X'$ of this quotient induced by the inclusion of $M$ in $D'M$, and we have $\Gamma_x\subseteq\Gamma'_{\iota(x)}$.

\section{Profinite properties of JSJ decompositions of compact 3-manifolds}
Several useful properties of the JSJ decomposition of a closed 3-manifold were proved by Wilton and Zalesskii \cite[Theorems A and B]{WZ10}. In this section we note that these results extend to the bounded case. First we require a notion of when a graph of discrete groups is well-behaved.

We will denote the fundamental group of a graph of discrete groups $(X,\Gamma_\bullet)$ by $\pi_1(X,\Gamma_\bullet)$, and the fundamental group of a graph of profinite groups $(X,G_\bullet)$ by $\Pi_1(X,G_\bullet)$. See Section 6.2 of \cite{Ribes17} for information on graphs of profinite groups.
\begin{defn}
A graph of discrete groups $(X,\Gamma_\bullet)$ is {\em efficient} if $\pi_1(X,\Gamma_\bullet)$ is residually finite, each group $\Gamma_x$ is closed in the profinite topology on $\pi_1(X,\Gamma_\bullet)$, and $\pi_1(X, \Gamma_\bullet)$ induces the full profinite topology on each $\Gamma_x$.
\end{defn}
\begin{theorem}[Exercise 9.2.7 of \cite{RZ00}]\label{EffGivesInjective}
Let $(X,\Gamma_\bullet)$ be an efficient finite graph of discrete groups. Then $(X,\widehat \Gamma_\bullet)$ is an injective graph of profinite groups and \[\widehat{\pi_1(X, \Gamma_\bullet)}\iso\Pi_1(X, \widehat{\Gamma}_\bullet)\]
\end{theorem}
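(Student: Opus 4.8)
The plan is to exhibit mutually inverse continuous homomorphisms between $\widehat{\pi_1(X,\Gamma_\bullet)}$ and $\Pi_1(X,\widehat\Gamma_\bullet)$ directly from universal properties, and then to read off the injectivity statement as a by-product. Write $\Gamma=\pi_1(X,\Gamma_\bullet)$ and $\Pi=\Pi_1(X,\widehat\Gamma_\bullet)$, and fix once and for all a maximal subtree $Y\subseteq X$, to be used for both the abstract and the profinite fundamental group, together with a matching family of stable letters $t_e$ for the edges $e\notin Y$. The preliminary observation is that efficiency identifies, for every vertex or edge $x$, the group $\widehat{\Gamma_x}$ with the closure $\overline{\Gamma_x}$ of $\Gamma_x$ in $\widehat\Gamma$: residual finiteness of $\Gamma$ embeds $\Gamma$ into $\widehat\Gamma$, while the hypothesis that $\Gamma$ induces the full profinite topology on $\Gamma_x$ says precisely that the topology $\Gamma_x$ inherits from $\widehat\Gamma$ is its own profinite topology, so the canonical surjection $\widehat{\Gamma_x}\to\overline{\Gamma_x}\leq\widehat\Gamma$ is an isomorphism onto a closed subgroup (closedness of $\Gamma_x$ then also gives $\overline{\Gamma_x}\cap\Gamma=\Gamma_x$, which is not needed for the isomorphism but keeps the picture clean).

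For the forward map I would use the standard presentation of $\Gamma$ relative to $Y$: it is generated by the vertex groups $\Gamma_v$ and the letters $t_e$, subject to the internal relations of each $\Gamma_v$ together with the edge relations $\partial_0(g)=\partial_1(g)$ for $e\in Y$ and $t_e\,\partial_0(g)\,t_e^{-1}=\partial_1(g)$ for $e\notin Y$. Sending each $\Gamma_v$ through $\Gamma_v\to\widehat{\Gamma_v}\to\Pi$ and each $t_e$ to the corresponding stable letter of $\Pi$ respects all of these relations, since the structural maps of $\Pi$ are built from the completed boundary maps $\widehat\partial_i$, which extend the $\partial_i$; so we obtain a homomorphism $\Gamma\to\Pi$, and, $\Pi$ being profinite, this extends uniquely to a continuous homomorphism $\Phi\colon\widehat\Gamma\to\Pi$. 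For the backward map I would invoke the universal property of the profinite fundamental group (Section 6.2 of \cite{Ribes17}): to build a continuous homomorphism out of $\Pi$ it suffices to give a compatible family of continuous homomorphisms $\widehat{\Gamma_v}\to\widehat\Gamma$ and elements $\tau_e\in\widehat\Gamma$ satisfying the edge relations. Take $\widehat{\Gamma_v}\xrightarrow{\sim}\overline{\Gamma_v}\hookrightarrow\widehat\Gamma$ from the first paragraph and let $\tau_e$ be the image in $\widehat\Gamma$ of the abstract letter $t_e$; the edge relations hold because they already hold in $\Gamma$, hence in $\widehat\Gamma$, and then pass to $\widehat{\Gamma_e}$ by continuity. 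This yields a continuous homomorphism $\Psi\colon\Pi\to\widehat\Gamma$.

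It then remains to check $\Phi$ and $\Psi$ are mutually inverse, which is routine: $\Psi\circ\Phi$ fixes the vertex groups $\Gamma_v$ and the letters $t_e$, hence is the identity on $\Gamma$ and so, by density, on $\widehat\Gamma$; and $\Phi\circ\Psi$ fixes each $\widehat{\Gamma_v}$ and each stable letter, which together topologically generate $\Pi$, so it is the identity on $\Pi$. This gives $\widehat{\pi_1(X,\Gamma_\bullet)}\iso\Pi_1(X,\widehat\Gamma_\bullet)$. Injectivity of the profinite graph of groups then falls out: for a vertex $v$ the composite $\widehat{\Gamma_v}\to\Pi\xrightarrow{\ \Psi\ }\widehat\Gamma$ is the inclusion of $\overline{\Gamma_v}$, and for an edge $e$ the composite $\widehat{\Gamma_e}\xrightarrow{\widehat\partial_0}\widehat{\Gamma_{d_0e}}\to\Pi\xrightarrow{\ \Psi\ }\widehat\Gamma$ agrees on $\Gamma_e$ with the inclusion $\Gamma_e\hookrightarrow\widehat\Gamma$, hence by continuity is the isomorphism onto $\overline{\Gamma_e}$; in either case the composite is injective, so the structural map $\widehat{\Gamma_x}\to\Pi$ is injective.

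I expect no deep obstacle here; the content is essentially packed into the hypotheses. The two points that need genuine care are, first, that $\Pi$ must be used only through its universal property — one is not entitled to assume in advance that the structural maps $\widehat{\Gamma_x}\to\Pi$ are injective, since that is part of what is being proved — and, second, that one must verify carefully that the completed boundary maps $\widehat\partial_i$ really do extend the abstract $\partial_i$ and that the same maximal tree and stable letters are used on both sides, so that every edge relation is transported correctly and the two presentations match on the nose.
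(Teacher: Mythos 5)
Your argument is correct, but note that the paper does not actually prove this statement: it is quoted as Exercise 9.2.7 of Ribes--Zalesskii and used as a black box, so there is nothing in the source to compare against except the citation. What you have written is the standard solution to that exercise, and you have put the weight in the right places. The decisive step is the first paragraph: residual finiteness embeds $\Gamma$ in $\widehat\Gamma$, and the full-profinite-topology hypothesis makes the canonical surjection $\widehat{\Gamma_x}\to\overline{\Gamma_x}$ injective (its kernel is $\bigcap_V \overline{\Gamma_x\cap V}$ over finite-index $V\trianglelefteq\Gamma$, which is trivial exactly when the induced topology on $\Gamma_x$ is the full profinite topology), whence an isomorphism onto a closed subgroup. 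The rest is formal: the forward map comes from the universal property of the profinite completion, the backward map from the universal property of $\Pi_1$ of a finite graph of profinite groups with respect to continuous specializations (which is how Ribes defines it, as the completion of the abstract fundamental group relative to the finite-index normal subgroups meeting each $\widehat{\Gamma_v}$ in an open subgroup), and the two composites are identities by density of $\Gamma$ in $\widehat\Gamma$ and by topological generation of $\Pi$ by the vertex groups and stable letters. Your closing caveats are exactly the right ones; in particular you correctly refrain from assuming injectivity of the structural maps $\widehat{\Gamma_x}\to\Pi$ and instead extract it from the factorisation through $\Psi$. One could quibble that separability of the $\Gamma_x$ (the second clause of efficiency) ends up unused in your write-up --- as you yourself observe --- which is consistent with the fact that only residual finiteness and the full-topology condition are needed for the isomorphism and injectivity statements as literally stated.
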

Here an `injective' graph of groups is one for which the canonical $\widehat{\Gamma}_\bullet\to\Pi_1(X, \widehat{\Gamma}_\bullet)$ are inclusions. This is automatic for graphs of discrete groups, but not for profinite groups.

Let $G=\widehat{\pi_1 M}$ and let $\famS$ be the collection of closed subgroups of $G$ consisting of the closures in $G$ of the fundamental groups of boundary components of $M$. By Theorem \ref{FullSepBdy} these closures are precisely the profinite completions of the respective fundamental groups of boundary components. It easily follows that doubling along them will give efficient graphs of groups. 

\begin{prop}
Let $M$ be a compact irreducible 3-manifold with incompressible boundary. Let $\Gamma=\pi_1 M$, let the JSJ decomposition of $M$ be $(X, M_\bullet)$ and let $\Gamma_\bullet=\pi_1 M_\bullet$. Then the graph of groups $(X, \Gamma_\bullet)$ is efficient. 
\end{prop}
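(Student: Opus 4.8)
The plan is to verify the three conditions in the definition of efficiency: that $\pi_1(X,\Gamma_\bullet)=\pi_1 M$ is residually finite, that each edge or vertex group $\Gamma_x$ is closed in the profinite topology of $\pi_1 M$, and that $\pi_1 M$ induces the full profinite topology on each $\Gamma_x$. Residual finiteness of $\pi_1 M$ is immediate since all compact $3$-manifold groups are residually finite. For the remaining two conditions -- equivalently, that each $\Gamma_x$ is fully separable in $\pi_1 M$ -- I would reduce to the known case of toroidal boundary using the double $D'M$ of $M$ along its higher-genus boundary (which has toroidal boundary) together with the folding retraction $\rho\colon\pi_1 D'M\to\pi_1 M$.

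First I would record two soft facts. (a) If $H$ is a retract of a residually finite group $G$, then $H$ is fully separable in $G$: applying profinite completion to the retraction $r\colon G\to H$ and to the inclusion shows that $\widehat H\to\widehat G$ is a split injection, so the closure of $H$ in $\widehat G$ is canonically $\widehat H$ (giving the full topology), and any $g\in G$ lying in that closure satisfies $g=r(g)\in H$ (giving separability). (b) Full separability is transitive along a chain $A\leq B\leq C$ and also passes to intermediate subgroups $A\leq B\leq C$; both are proved simply by intersecting the witnessing finite-index subgroups with the smaller group. I would then invoke the toroidal case: the JSJ graph of groups $(X',\Gamma'_\bullet)$ of $D'M$, whose edge spaces are the JSJ tori of $M$ and the doubles of the JSJ annuli of $M$, is efficient by the work of Wilton and Zalesskii \cite{WZ10,WZ17}; in particular every $\Gamma'_y$ is fully separable in $\pi_1 D'M$.

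The transfer then runs as follows. Since the folding map $D'M\to M$ carries JSJ pieces to JSJ pieces, for every $x\in X$ we have $\Gamma_x\subseteq\Gamma'_{\iota(x)}$ and $\rho$ restricts to a surjection $\Gamma'_{\iota(x)}\to\Gamma_x$ that is the identity on $\Gamma_x$; thus $\Gamma_x$ is a retract of the residually finite group $\Gamma'_{\iota(x)}$, hence fully separable in it by (a). Combined with the efficiency of $(X',\Gamma'_\bullet)$ and the transitivity of (b), $\Gamma_x$ is fully separable in $\pi_1 D'M$, and then, as $\Gamma_x\leq\pi_1 M\leq\pi_1 D'M$, the restriction clause of (b) gives that $\Gamma_x$ is fully separable in $\pi_1 M$. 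This handles the vertex groups; for the edge groups one can argue identically, or more quickly note that an edge group is the fundamental group of a boundary component of an adjacent JSJ vertex piece $M_v$, which is fully separable in $\Gamma_v$ by Theorem \ref{FullSepBdy}, and then apply transitivity to the vertex case just proved. The genuinely delicate point is the reduction step itself: one must double along the higher-genus boundary only, so that the folding map respects the JSJ structure and each $\Gamma_x$ really does sit inside a single JSJ vertex group of $D'M$ as a retract -- doubling along the torus boundary components as well would in general amalgamate Seifert pieces and destroy this. Facts (a), (b) and the remaining bookkeeping are routine.
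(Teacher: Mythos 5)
Your proposal is correct and follows essentially the same route as the paper: double $M$ along its higher-genus boundary, invoke the toroidal-boundary case of Wilton--Zalesskii for efficiency of the JSJ of $D'M$, and transfer full separability of each $\Gamma_x$ back to $\pi_1 M$ using the folding retraction $\rho$. The paper verifies separability and the full-profinite-topology condition by a direct inline computation with $\rho^{-1}(U)\cap\Gamma'_{\iota(x)}$ rather than via your packaged lemmas on retracts and transitivity, but the substance is identical.
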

\begin{proof}
For the case of manifolds with toroidal or empty boundary this is Theorem A of \cite{WZ10}. We will deduce the general case from this. Let $D'M$ be the double of $M$ along its higher-genus boundary and let the fundamental group of $D'M$ be $\Gamma'$. Let the graph of groups corresponding to the JSJ decomposition of $D'M$ be $(X',\Gamma'_\bullet)$. This graph of groups is efficient by the toroidal boundary case. Let $\rho\colon \Gamma' \to \Gamma$ be the retraction and let $\iota\colon X\to X'$ be the section defined in Section \ref{subsecJSJbackgd}.

Let $x\in X$ and let $x'=\iota(x)$. If $g\in \Gamma\smallsetminus \Gamma_x$ then $g\in \Gamma'\smallsetminus \Gamma'_{x'}$, so there is a finite quotient of $\Gamma'$ distinguishing $g$ from $\Gamma'_{x'}$, hence from $\Gamma_x$. So $\Gamma_x$ is separable in $\Gamma$.

Let $U\nsgp[f]\Gamma_x$ be a finite index normal subgroup of $\Gamma_x$. Then $\rho^{-1}(U)\cap \Gamma'_{x'}$ is a finite index normal subgroup of $\Gamma'_{x'}$ which intersects $\Gamma_x$ in precisely $U$. There is a finite index subgroup $V$ of $\Gamma'$ such that $V\cap \Gamma'_{x'}$ is contained in $\rho^{-1}(U)\cap \Gamma'_{x'}$. Then $V\cap \Gamma$ is a finite index subgroup of $\Gamma$ whose intersection with $\Gamma_x$ is contained in $U$. So $\Gamma$ induces the full profinite topology on $\Gamma_x$ and we are done.  
\end{proof}
Retaining the notation of the previous proposition, we now have two injective graphs of profinite groups $(X,\widehat\Gamma_\bullet)$ and $(X',\widehat\Gamma'_\bullet)$. Because $\Gamma'$ induces the full profinite topology on its retract $\Gamma$, the section $\iota$ still induces inclusions $\widehat\Gamma_x\to \widehat{\Gamma}'_{\iota(x)}$. Furthermore $\Z/2$ still acts on doubles in the same way as in Section \ref{subsecJSJbackgd}. The efficiency of doubling operations and of JSJ decompositions implies that the analysis at the end of Section \ref{subsecJSJbackgd} still applies. We collect this as a proposition for reference later.
\begin{prop}\label{retractionofJSJ}
Let $M$ be a compact irreducible 3-manifold with incompressible boundary. Let $\Gamma=\pi_1 M$, let the JSJ decomposition of $M$ be $(X, M_\bullet)$ and let $\Gamma_\bullet=\pi_1 M_\bullet$. Further let $D'M$ be the double of $M$ along its higher genus boundary and let $(X, \Gamma'_\bullet)$ be the graph of groups decomposition of $\Gamma' = \pi_1D'M$ corresponding to the JSJ decomposition of $D'M$.

Let $\Z/2 = \gp{\tau}$ act on $D'M$ by swapping the two copies of $M$. Then this action descends to an action on $\widehat\Gamma'$ which commutes with the natural retraction $\rho\colon\widehat\Gamma'\to \widehat\Gamma$. Also $\widehat\Gamma'_{\tau\cdot x} = \tau\cdot\widehat\Gamma'_x$ for all $x\in X'$ and there is an equality of graphs of groups
\[(X,\widehat\Gamma_\bullet) = (X'/\gp{\tau}, \rho(\widehat\Gamma'_\bullet)) \] 
where the group corresponding to a point $\{x, \tau\cdot x\}$ of $X'/\gp{\tau}$ is $\rho(\widehat\Gamma'_x) = \rho(\tau\cdot\widehat\Gamma'_x)$. Finally there is also section $\iota\colon X\to X'$ of this quotient induced by the inclusion of $M$ in $D'M$, and $\widehat\Gamma_x\subseteq\widehat\Gamma'_{\iota(x)}$.
\end{prop}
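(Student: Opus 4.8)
The plan is to obtain every assertion by applying the profinite completion functor to the corresponding statements for discrete groups established in Section \ref{subsecJSJbackgd}, using the two efficiency propositions above (together with Theorem \ref{EffGivesInjective}) to guarantee that the profinite completion of each vertex or edge group coincides with its closure in the ambient profinite completion. Concretely, I would first recall the discrete data from Section \ref{subsecJSJbackgd}: the retraction $\rho\colon\Gamma'\to\Gamma$ with section $\iota$; the order-two automorphism $\tau$ of $\Gamma'$ together with its compatible action on $X'$; the identity $\rho\tau=\rho$, which holds because the folding map $D'M\to M$ is $\tau$-equivariant; the equality $\Gamma'_{\tau\cdot x}=\tau\cdot\Gamma'_x$; the fact that folding carries JSJ pieces to JSJ pieces, so that $\Gamma_x\subseteq\Gamma'_{\iota(x)}$ and $\rho(\Gamma'_{\iota(x)})=\Gamma_x$; and finally the identification of the JSJ graph of groups of $M$ with $(X'/\gp{\tau},\rho(\Gamma'_\bullet))$, where $X'/\gp{\tau}\cong X$.

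Next I would extend the maps to completions. The automorphism $\tau$ induces an automorphism $\widehat\tau$ of $\widehat\Gamma'$, again of order two, and $\rho$ extends to $\widehat\rho\colon\widehat\Gamma'\to\widehat\Gamma$; since $\Gamma'$ induces the full profinite topology on its retract $\Gamma$ (as noted in the remarks immediately preceding the statement), $\widehat\rho$ is a genuine retraction onto the closed subgroup $\widehat\Gamma\leq\widehat\Gamma'$. The relations $\widehat\rho\,\widehat\tau=\widehat\rho$ and $\widehat\tau^{\,2}=\id$ hold on the dense subgroup $\Gamma'$, hence everywhere by continuity, and the action on $X'$ is unchanged. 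This delivers the first two assertions of the proposition once combined with the identification of the vertex and edge groups in the next step.

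Then I would identify the vertex and edge groups. By the efficiency of $(X',\Gamma'_\bullet)$ and Theorem \ref{EffGivesInjective}, $\widehat\Gamma'_x$ is the closure of $\Gamma'_x$ in $\widehat\Gamma'$, and likewise $\widehat\Gamma_x$ is the closure of $\Gamma_x$ in $\widehat\Gamma$. As $\widehat\tau$ and $\widehat\rho$ are continuous with compact domain, they send closed subgroups to closed subgroups, so that $\widehat\Gamma'_{\tau\cdot x}=\overline{\tau\cdot\Gamma'_x}=\widehat\tau(\overline{\Gamma'_x})=\widehat\tau\cdot\widehat\Gamma'_x$ and, writing $x=\iota(\bar x)$, $\widehat\rho(\widehat\Gamma'_x)=\widehat\rho(\overline{\Gamma'_x})=\overline{\rho(\Gamma'_x)}=\overline{\Gamma_{\bar x}}=\widehat\Gamma_{\bar x}$; the identical computation applies to edge groups. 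In particular $\rho(\widehat\Gamma'_x)=\rho(\tau\cdot\widehat\Gamma'_x)$ for all $x\in X'$, and $\widehat\Gamma_x=\overline{\Gamma_x}\subseteq\overline{\Gamma'_{\iota(x)}}=\widehat\Gamma'_{\iota(x)}$. Since the underlying graph $X'/\gp{\tau}\cong X$ and the incidence maps (restrictions of $\widehat\rho$) are unchanged from the discrete picture, this yields the asserted equality of graphs of profinite groups $(X,\widehat\Gamma_\bullet)=(X'/\gp{\tau},\rho(\widehat\Gamma'_\bullet))$.

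There is no serious obstacle here: the content is bookkeeping, and the only points requiring care are the systematic use of efficiency to pass between ``profinite completion of a subgroup'' and ``closure of that subgroup'', and the elementary fact that a continuous homomorphism of profinite groups carries compact (hence closed) subgroups to closed subgroups, which is what legitimises the manipulations ``closure of an image $=$ image of a closure''. The verification that $\widehat\rho$ is a genuine retraction onto $\widehat\Gamma$ likewise rests on the full profinite topology being induced, but this has already been recorded in the paragraph preceding the proposition.
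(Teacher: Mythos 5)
Your proposal is correct and follows exactly the route the paper takes: the paper offers no formal proof of this proposition, instead justifying it by the paragraph immediately preceding it (the discrete analysis of Section \ref{subsecJSJbackgd} carries over because efficiency identifies profinite completions of vertex groups with their closures, and the retraction/involution extend continuously). Your write-up simply fills in the same bookkeeping in more detail, and the details are right.
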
 
Finally we make an observation that the graph of groups $(X,\widehat\Gamma_\bullet)$ has a useful property called {\em acylindricity}. Just as in classical Bass-Serre theory these injective graphs of groups gives actions of the fundamental groups $G=\widehat{\Gamma}$ and $G'=\widehat{\Gamma'}$ on profinite trees, called the {\em standard graphs} of the graphs of groups. Denote these standard graphs by $T$ and $T'$. See \cite[Section 2.4]{Ribes17} for the definition of a profinite tree and \cite[Section 6.3]{Ribes17} for the definition of the standard graph. By construction of the standard graph there is a $G$-equivariant inclusion $j\colon T\to T'$. 

Recall that an action of a profinite group $G$ on a profinite tree $T$ is {\em $k$-acylindrical} if the stabiliser of any path of length greater than $k$ is trivial. If a path in $T$ has non-trivial stabiliser then the image of this path under $j$ is a path of the same length in $T'$ with non-trivial stabiliser. Since $G'$ is the profinite completion of a 3-manifold with toroidal boundary and its action on $T'$ is that coming from the JSJ decomposition, the action of $G'$ on $T'$ is $k$-acylindrical (\cite[Proposition 6.8]{Wilkes16} or \cite[Lemma 4.11]{HWZ12} and \cite[Lemma 4.5]{WZ14}) with $k$ equal to 1, 2 or 4 depending on the manifold. We therefore have the following proposition. 
\begin{prop}
Let $M$ be a compact irreducible 3-manifold with incompressible boundary. Let $\Gamma=\pi_1 M$, let the JSJ decomposition of $M$ be $(X, M_\bullet)$ and let $\Gamma_\bullet=\pi_1 M_\bullet$. Then the action of $\widehat\Gamma$ on the standard graph of the graph of groups $(X, \widehat\Gamma_\bullet)$ is acylindrical. 

More precisely let $D'M$ be the double of $M$ along its higher genus boundary and let $(X, \Gamma'_\bullet)$ be the graph of groups decomposition of $\Gamma' = \pi_1(D'M)$ corresponding to the JSJ decomposition of $D'M$. If the action of $\widehat\Gamma'$ on the standard graph of the graph of groups $(X', \widehat\Gamma'_\bullet)$ is $k$-acylindrical then the standard graph of the graph of groups $(X, \widehat\Gamma_\bullet)$ is $k'$-acylindrical for some $k'\leq k$.
\end{prop}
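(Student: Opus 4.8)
The plan is to deduce the statement from the $G$-equivariant embedding $j\colon T\to T'$ of standard graphs already introduced above, pulling the $k$-acylindricity of the $G'$-action on $T'$ back along $j$. Recall from Proposition~\ref{retractionofJSJ} that the section $\iota\colon X\to X'$ realises $(X,\widehat\Gamma_\bullet)$ as a sub-graph-of-groups of $(X',\widehat\Gamma'_\bullet)$ via the inclusions $\widehat\Gamma_x\subseteq\widehat\Gamma'_{\iota(x)}$, and that the retraction $\rho$ restricts to the identity on $\widehat\Gamma$, so that $\widehat\Gamma_x=\widehat\Gamma'_{\iota(x)}\cap\widehat\Gamma$ for every $x$. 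Since $\Gamma$ is a retract of $\Gamma'$, the induced map $G=\widehat\Gamma\to\widehat{\Gamma'}=G'$ is injective and we regard $G$ as a closed subgroup of $G'$. The standard-graph construction applied to the inclusion of graphs of groups then produces the stated $G$-equivariant map $j\colon T\to T'$; using $\widehat\Gamma_x=\widehat\Gamma'_{\iota(x)}\cap\widehat\Gamma$ one checks that on the coset pieces $G/\widehat\Gamma_x\to G'/\widehat\Gamma'_{\iota(x)}$ this map is injective, so that $j$ is a closed embedding of profinite graphs compatible with the incidence maps.

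With this in place the argument is formal. Fix the constant $k\in\{1,2,4\}$ for which the $G'$-action on $T'$ is $k$-acylindrical, as furnished by \cite[Proposition 6.8]{Wilkes16} (or \cite[Lemma 4.11]{HWZ12} together with \cite[Lemma 4.5]{WZ14}). Let $\gamma$ be a path of length greater than $k$ in $T$. Because $j$ is an embedding of profinite graphs respecting incidence it carries $\gamma$ to a path $j(\gamma)$ of the same length in $T'$, and the $G$-equivariance $g\cdot j(\gamma)=j(g\cdot\gamma)$ shows that the stabiliser in $G$ of $\gamma$ is contained in the stabiliser in $G'$ of $j(\gamma)$. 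Since $j(\gamma)$ has length greater than $k$, the latter stabiliser is trivial by $k$-acylindricity, hence so is the former. Therefore the $G$-action on $T$ is $k$-acylindrical, and taking $k'$ to be the least integer for which $k'$-acylindricity holds gives $k'\le k$; in particular $k'$ is finite, which is the first assertion.

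The one point that needs genuine care — everything else being bookkeeping — is the assertion that $j$ sends a path of length $n$ in $T$ to a path of length $n$ in $T'$, since in the profinite setting ``path'' and ``length'' are not completely naive notions. Here I would argue that $j$ is a closed injection of profinite graphs (from injectivity of both graphs of groups, Theorem~\ref{EffGivesInjective} applied to the efficient JSJ decompositions of $M$ and $D'M$, together with the computation $\widehat\Gamma_x=\widehat\Gamma'_{\iota(x)}\cap\widehat\Gamma$) and that $T$, $T'$ are profinite trees; a reduced path of length $n$ in the tree $T$ cannot acquire a backtracking or a repeated vertex under the injective incidence-preserving map $j$ (a repeated vertex would force a circuit in the tree $T'$), so its image is again a reduced path of length $n$, which in $T'$ is the geodesic between its endpoints. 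Once this functoriality of the standard graph at the combinatorial level is confirmed, the acylindricity transfer above goes through verbatim.
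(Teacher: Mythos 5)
Your proposal is correct and follows essentially the same route as the paper: the paper likewise constructs the $G$-equivariant inclusion $j\colon T\to T'$ of standard graphs, notes that a path with non-trivial stabiliser in $T$ maps to a path of the same length with non-trivial stabiliser in $T'$, and pulls back the known $k$-acylindricity of the $G'$-action on $T'$. Your extra verification that $\widehat\Gamma_x=\widehat\Gamma'_{\iota(x)}\cap\widehat\Gamma$ (hence that $j$ is injective on coset pieces) is a detail the paper leaves implicit in the phrase ``by construction of the standard graph,'' and it checks out.
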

\begin{rmk}
The inequality in the final part of the theorem may be strict in the following case. If the JSJ pieces of $M$ consist of \anan\ atoroidal manifolds and at least one $I$-bundle then the action of $\widehat{\Gamma}$ is 1-acylindrical; however $D'M$ has a Seifert fibred piece and the action of $\widehat{\Gamma}'$ is 2-acylindrical.
\end{rmk}

\section{Atoroidality and an-annularity for profinite group pairs}
Throughout this section let $(G,\famS)$ be a profinite group pair.
\begin{defn}
Let $(G,\famS)$ be a profinite group pair. We define the following notions.
\begin{itemize}
\item $(G,\famS)$ is {\em atoroidal} if every abelian subgroup of $G$ is either procyclic or conjugate into an abelian member of $\famS$. If in addition the collection of abelian members of \famS\ is malnormal then we say the group pair is {\em strictly atoroidal}.
\item $(G,\famS)$ is \anan\ if for every procyclic subgroup $A\leq G$ we have $\|\famS^A\| \leq 1$.
\end{itemize}
\end{defn}
The \anan{}ity property is in fact equivalent to malnormality of $\famS$ in $G$ (Propsition \ref{MalnImpliesAnan} below), but provides a better formulation for the application of relative cohomology theory.

The definition of an-annular states roughly that each cyclic subgroup has at most one intersection with a peripheral subgroup. However {\it a priori} such a single intersection could still represent something like a M\"obius band properly embedded in a manifold. However this, and any similar situations that could arise in a profinite group, are ruled out by the following lemma (which may perhaps be thought of as a group theoretic analogue of taking a regular neighbourhood of a M\"obius band to obtain an annulus). 
\begin{lem}\label{nomobiusbands}
Suppose $(G,\famS)$ is \anan. Let $A\leq G$ be a procyclic subgroup with $\|\famS^A\| = 1$. Then the unique non-trivial member of $\famS^A$ is equal to $A$. 
\end{lem}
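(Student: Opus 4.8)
The plan is to argue by contradiction. Write $B = A \cap g_0 S_{x_0}g_0^{-1}$ for the unique non-trivial member of $\famS^A$ (which is non-trivial precisely because $\|\famS^A\|=1$); I want to show that if $B$ were a proper subgroup of $A$, then there is a procyclic subgroup $A'\leq G$ with $\|\famS^{A'}\|\geq 2$, contradicting the hypothesis that $(G,\famS)$ is \anan.

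First I would record the trivial simplification that, since $A$ and any procyclic subgroup of $G$ are abelian, conjugation within such a subgroup is trivial, so the family $\famS^H$ for $H$ procyclic does not depend on the chosen section and its members are exactly the subgroups $H\cap gS_x g^{-1}$ as $(g,x)$ ranges over (representatives of) the index set $\bigsqcup_x H\lqt G/S_x$. Now pick $b\neq 1$ in $B$ and set $A' = \overline{\gp{b}}$, a procyclic subgroup contained in $B\leq A$. The key computation is then elementary: for any $a\in A$, since $A$ is abelian we have $b = aba^{-1}\in (ag_0)S_{x_0}(ag_0)^{-1}$ as well as $b\in g_0 S_{x_0}g_0^{-1}$, so both $A'\cap g_0 S_{x_0}g_0^{-1}$ and $A'\cap (ag_0)S_{x_0}(ag_0)^{-1}$ are non-trivial. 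These two subgroups occur in $\famS^{A'}$ at the indices corresponding to the double cosets $A'g_0S_{x_0}$ and $A'(ag_0)S_{x_0}$ in $A'\lqt G/S_{x_0}$, and I claim these two double cosets coincide if and only if $a\in B$: unwinding, $A'g_0S_{x_0} = A'ag_0S_{x_0}$ is equivalent to $a'^{-1}a\in g_0 S_{x_0}g_0^{-1}$ for some $a'\in A'$, and since $a'^{-1}a\in A$ and $A'\subseteq B$ this is in turn equivalent to $a\in B$. Hence if $B\neq A$, choosing $a\in A\smallsetminus B$ exhibits two distinct indices of $\famS^{A'}$ carrying non-trivial subgroups, so $\|\famS^{A'}\|\geq 2$ — the desired contradiction. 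Therefore $A\subseteq B$, and since $B\leq A$ always, $B=A$.

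I do not expect any genuinely hard step here; the only point requiring care is the bookkeeping, namely that $\|\cdot\|$ counts the indices of the family at which the subgroup is non-trivial, so what must be verified is that the two double cosets $A'g_0S_{x_0}$ and $A'(ag_0)S_{x_0}$ are genuinely distinct elements of the index set. This is exactly where passing from $A$ down to the smaller subgroup $A'\subseteq B$ is essential: over $A$ itself the cosets $Ag_0S_{x_0}$ and $Aag_0S_{x_0}$ always agree, so no contradiction would be visible, whereas shrinking the cyclic group is precisely the move that "unwraps" the putative Möbius band into two visibly distinct peripheral conjugates.
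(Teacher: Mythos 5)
Your proof is correct and is essentially the paper's argument: both reduce to a smaller procyclic subgroup sitting inside the single non-trivial intersection $B=A\cap g_0S_{x_0}g_0^{-1}$ and then use an element $a\in A\smallsetminus B$ to produce two distinct double cosets in the index set, each contributing a non-trivial member, contradicting an-annularity. The only (immaterial) difference is that the paper takes the smaller subgroup to be $B$ itself rather than $\overline{\gp{b}}$ for some $b\in B$.
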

\begin{proof}
Let $i\in I$ and $g\in G$ be such that $A\cap gS_i g^{-1} = A'\neq 1$. We will show that $\|\famS^{A'}\|\geq 2$, contradicting the \anan{}ity condition. For let $x\in A\smallsetminus A'$ and note that $a^{-1}x\notin gS_i g^{-1}$ for all $a\in A'$. This implies $A'gS_i\neq A'xgS_i$. Also, since $A$ is abelian we have 
\[A' = xA'x^{-1} = A\cap xg S_i (xg)^{-1} \neq 1 \neq A'\cap g S_i g^{-1} = A'\]
Thus the distinct elements $A'gS_i$ and $A'xgS_i$ in the indexing set $A'\lqt G/\famS$ both give non-trivial elements of $\famS^{A'}$. Hence $\|\famS^{A'}\|\geq 2$ as required.
\end{proof}
\begin{prop}\label{MalnImpliesAnan}
The family $\famS$ is a malnormal collection in $G$ if and only if $(G,\famS)$ is \anan.
\end{prop}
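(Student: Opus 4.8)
The plan is to establish the two implications directly from the definitions, using little more than the fact that the closure of a cyclic subgroup is procyclic, hence abelian; I do not anticipate needing Lemma~\ref{nomobiusbands} for either direction. Recall that the family $\famS=\{S_x\}_{x\in X}$ is \emph{malnormal} if $gS_xg^{-1}\cap S_y\neq 1$ (for $x,y\in X$ and $g\in G$) forces $x=y$ and $g\in S_x$; in particular each $S_x$ is malnormal in $G$, so that $C_G(s)\leq S_x$ whenever $1\neq s\in S_x$ (any $d$ centralising such an $s$ satisfies $s\in S_x\cap dS_xd^{-1}\neq 1$, hence $d\in S_x$).

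For the implication that an-annularity implies malnormality, suppose $1\neq t\in gS_yg^{-1}\cap S_x$ for some $x,y\in X$ and $g\in G$, and set $A=\overline{\gp{t}}$, a procyclic subgroup of $G$. Since $S_x$ and $gS_yg^{-1}$ are closed, $A\leq S_x$ and $A\leq gS_yg^{-1}$. A short computation with the definition of $\famS^A$ (in which, because $A$ is abelian, the members attached to these two double cosets are literally independent of the chosen section) shows that the member indexed by the trivial coset of $A\lqt G/S_x$ equals $A\cap S_x=A$ and the member indexed by $AgS_y\in A\lqt G/S_y$ equals $A\cap gS_yg^{-1}=A$, both non-trivial. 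An-annularity gives $\|\famS^A\|\leq 1$, so these two indices must coincide: this forces $x=y$ and $AgS_x=AS_x$, and since $A\leq S_x$ the latter equation says precisely that $g\in S_x$. Hence $\famS$ is malnormal.

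For the converse, suppose $\famS$ is malnormal while $(G,\famS)$ fails to be \anan, so that some procyclic $A\leq G$ has $\|\famS^A\|\geq 2$; fix two distinct indices of $\famS^A$, represented by pairs $(x,g)$ and $(y,h)$, for which $A'=A\cap gS_xg^{-1}$ and $A''=A\cap hS_yh^{-1}$ are both non-trivial. The point requiring care is that one cannot simply intersect $A'$ with $A''$: two non-trivial closed subgroups of a procyclic profinite group may well meet trivially (think of the $\Z[p]$- and $\Z[q]$-parts of $\Zhat$). Instead I would pick any $1\neq a'\in A'$, write $a'=gsg^{-1}$ with $1\neq s\in S_x$, and use the observation above to obtain $C_G(a')=gC_G(s)g^{-1}\leq gS_xg^{-1}$. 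Since $A$ is abelian, $A''\leq C_G(A')\leq C_G(a')$, whence $1\neq A''\leq gS_xg^{-1}\cap hS_yh^{-1}$. Conjugating by $g^{-1}$ and applying malnormality of $\famS$ yields $x=y$ and $g^{-1}h\in S_x$, so $AhS_y=AgS_x$; but then the two chosen indices are equal, a contradiction. Therefore $(G,\famS)$ is \anan.

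The main obstacle is exactly the profinite subtlety flagged above: the naive argument, which would try to exhibit a single non-trivial subgroup common to $gS_xg^{-1}$ and $hS_yh^{-1}$ and lying inside $A$, is simply not available in the profinite world, and the remedy is to pass from ``common subgroup'' to ``centraliser of one non-trivial element'', which is where malnormality (in the strong form $C_G(s)\leq S_x$ for $1\neq s\in S_x$) does the real work.
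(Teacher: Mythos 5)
Your proof is correct and follows essentially the same route as the paper's: both directions hinge on the fact that, since $A$ is abelian and $\famS$ is malnormal, any element commuting with a non-trivial element of $gS_xg^{-1}$ must itself lie in $gS_xg^{-1}$, which forces $A$ (and hence a non-trivial subgroup) into both conjugates simultaneously. Your centraliser packaging of the direction from malnormality to \anan{}ity is just a rephrasing of the paper's computation $A\cap gS_ig^{-1}=A\cap xgS_i(xg)^{-1}$, which likewise concludes $A\leq gS_ig^{-1}$ and thereby sidesteps exactly the trivial-intersection subtlety you flag.
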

\begin{proof}
Assume \famS\ is malnormal. Let $A\leq G$ be a procyclic subgroup and suppose $\|\famS^A\|\geq 2$. Then there exist $i,j\in I$ and $g,h\in G$ such that either $i\neq j$ or $i=j$ and $AgS_i\neq AhS_i$, and such that 
\[A\cap gS_ig^{-1} \neq 1\neq A\cap hS_jh^{-1} \]
Firstly note that both of these intersections equal $A$. For if, say, there exists $x\in A\smallsetminus gS_ig^{-1}$ then since $A$ is abelian we have 
\[A\cap gS_ig^{-1} = A\cap xgS_i(xg)^{-1} \]
whence $S_i\cap g^{-1}xgS_i(g^{-1}xg)^{-1}\neq 1$. Then by malnormality we would have $g^{-1}xg\in S_i$, a contradiction.

Therefore we have $A\leq gS_ig^{-1} \cap hS_jh^{-1}$ whence malnormality implies $i=j$ and $g^{-1}h\in S_i$ so that $AgS_i=AhS_j$, again a contradiction.

For the converse, if \famS\ is not malnormal then there is a non-trivial cyclic subgroup $1\neq A\subseteq S_i\cap S_j^g$ where either $i\neq j$ or $i=j$ and $g\notin S_j$. Then $A1S_i$ and $Ag^{-1}S_j$ are distinct elements of $A\lqt G/\famS$ with 
\[ A\cap 1S_i1^{-1} \neq 1\neq A\cap g^{-1}S_jg \]
hence $\|\famS^A\|\geq 2$ and we are done.
\end{proof}
\begin{theorem}\label{AnanFixVertex}
Let $(G,\famS)$ be a profinite group pair which is a PD$^n$ pair at every prime $p$, for some $n\geq 3$. Suppose $(G,\famS)$ is strictly atoroidal and \anan. Then any action of $G$ on a profinite tree $T$ with abelian edge stabilisers fixes a vertex of $T$.
\end{theorem}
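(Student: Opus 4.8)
The plan is to apply Theorem \ref{FixedVertex}: to show $G$ fixes a vertex of $T$ it suffices to show that for every edge $e$ of $T$ the relative cohomological dimension $\cd_p(G_e, \famS^{G_e})$ is strictly less than $n-1$ for every prime $p$. Since $n \geq 3$ we have $n-1 \geq 2$, so it is enough to show $\cd_p(G_e, \famS^{G_e}) \leq 1$ for all $p$. The edge stabiliser $G_e$ is abelian by hypothesis, so the strict atoroidality hypothesis immediately divides the analysis into two cases: either $G_e$ is procyclic, or $G_e$ is conjugate into an abelian member $S_0$ of $\famS$. Invariance of relative cohomology under conjugating the subgroups (the third bullet in the list of black-box properties) lets us assume without loss of generality that $G_e$ is honestly contained in $S_0$ in the second case, and similarly normalises the first case so that we may work with $G_e$ directly.

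First I would handle the procyclic case. Here the an-annularity hypothesis controls $\famS^{G_e}$: by definition $\|\famS^{G_e}\| \leq 1$, so at most one member of $\famS^{G_e}$ is non-trivial, and by Lemma \ref{nomobiusbands} that member, if it exists, is equal to $G_e$ itself. If $\famS^{G_e}$ has no non-trivial member then $\cd_p(G_e,\famS^{G_e}) = \cd_p(G_e)$ which is at most $1$ since $G_e$ is procyclic. If the unique non-trivial member is $G_e$, then using Lemma \ref{LemReductionOfFamily} we reduce to the pair $(G_e,\{G_e\})$; the long exact sequence of the pair (first bullet) then forces all the relative cohomology to vanish in degrees $\geq 2$ — the pair $(G_e, \{G_e\})$ is cohomologically trivial in high degrees — so $\cd_p(G_e,\famS^{G_e}) \leq 1$. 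Either way the bound holds.

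Next I would handle the case $G_e \leq S_0$ with $S_0$ an abelian member of $\famS$. The point is that $\famS^{G_e}$ now contains the member $G_e \cap S_0 = G_e$, i.e.\ all of $G_e$, so again Lemma \ref{LemReductionOfFamily} applies provided all the other intersections $G_e \cap \sigma(y) S_x \sigma(y)^{-1}$ are trivial. This is exactly where strict atoroidality pays off: malnormality of the abelian members of $\famS$ forces that a non-trivial element of $G_e$ cannot lie in two distinct conjugates of members of $\famS$, so every other member of $\famS^{G_e}$ is trivial. Then once more $\cd_p(G_e,\famS^{G_e}) = \cd_p(G_e,\{G_e\}) \leq 1$. (One must be slightly careful with the exceptional clause in Lemma \ref{LemReductionOfFamily} allowing dimensions $0$ and $1$ to be confused, but since our target inequality is $\leq 1$, not an equality, this causes no trouble.)

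The main obstacle I expect is bookkeeping rather than conceptual: one has to verify carefully that in each case the \emph{only} non-trivial member of the restricted family $\famS^{G_e}$ is $G_e$ itself — ruling out both spurious extra intersections (handled by an-annularity in the procyclic case and by malnormality in the peripheral case) and the ``M\"obius band'' phenomenon where the intersection is a proper non-trivial subgroup of $G_e$ (ruled out by Lemma \ref{nomobiusbands}). Once the restricted family is pinned down to be $\{G_e\}$ together with trivial groups, the cohomological dimension computation is immediate from Lemma \ref{LemReductionOfFamily} and the fact that abelian profinite groups, and in particular procyclic ones and closed subgroups of the abelian peripheral groups, have $\cd_p \leq 1$ — wait, this last fact is not automatic for arbitrary abelian profinite groups, so in the peripheral case I would instead invoke that the relevant peripheral subgroups arising here are themselves procyclic or have small cohomological dimension, which should follow from the structure of the pairs under consideration; if $S_0$ has large cohomological dimension one would need the sharper input that $G_e$, being an edge group, is procyclic, which brings us back to the first case. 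Resolving this interaction cleanly is the delicate point of the argument.
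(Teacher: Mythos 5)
Your proposal is correct and follows essentially the same route as the paper's proof: reduce via Theorem \ref{FixedVertex} to bounding $\cd_p(G_e,\famS^{G_e})$ for each edge $e$, split on whether $G_e$ is procyclic, use an-annularity together with Lemma \ref{nomobiusbands} in the procyclic case and strict atoroidality in the other, and finish both cases with Lemma \ref{LemReductionOfFamily}.

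The one point to clear up is the worry in your final paragraph, which is unfounded. You do not need $\cd_p(G_e)\leq 1$, nor any bound on the cohomological dimension of the peripheral subgroup $S_0$, in the non-procyclic case. You already stated the relevant fact when handling the procyclic case: once the restricted family has been reduced to $\{G_e\}$ together with trivial groups, the long exact sequence of the pair $(G_e,\{G_e\})$ gives $H^k(G_e,\{G_e\};A)=0$ for all $k\geq 2$ and all coefficients $A$, because the restriction map $H^k(G_e;A)\to H^k(G_e;A)$ is the identity. This vanishing holds irrespective of $\cd_p(G_e)$, and it is exactly what the paper invokes in the non-cyclic case. So there is no delicate interaction between the two cases to resolve; both close out identically once you know that the unique non-trivial member of $\famS^{G_e}$ is $G_e$ itself (by an-annularity and Lemma \ref{nomobiusbands} when $G_e$ is procyclic, and by strict atoroidality --- equivalently, using Proposition \ref{MalnImpliesAnan}, malnormality of $\famS$ --- when it is not).
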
 
\begin{proof}
If not, then by Theorem \ref{FixedVertex} there exists a prime $p$ and an edge $e$ of $T$ such that $\cd_p(G_e, \famS^{G_e})\geq 2$, where $G_e$ denotes the stabiliser of $e$. We show that this is not the case. Let $e$ be an edge of $T$.

If $G_e$ is not cyclic then by strict atoroidality $G_e$ intersects a unique conjugate of a member of $\famS$ and is contained within it. Therefore the collection $\famS^{G_e}$ contains exactly one non-trivial member which is $G_e$ itself.  Therefore by Lemma \ref{LemReductionOfFamily} we have
\[H^k(G_e, \famS^{G_e}) = H^k(G_e,\{G_e\}) = 0 \text{ for all $k\geq 2$} \]
with coefficients in any $G_e$-module. Hence $\cd_p(G_e, \famS^{G_e})<2$.

Otherwise if $G_e$ is cyclic then by hypothesis $\|\famS^{G_e}\|\leq 1$. If $\|\famS^{G_e}\| = 0$ then $\cd_p(G_e, \famS^{G_e})=1$. If $\|\famS^{G_e}\| = 1$ then by Lemma \ref{nomobiusbands} the one non-trivial member of $\famS^{G_e}$ is $G_e$ itself and, as before, $\cd_p(G_e, \famS^{G_e})\leq 1$.
\end{proof}

\section{Malnormality in the profinite completion} 
We first set up some notation for the section. Let $M$ be a compact 3-manifold with non-empty incompressible boundary. Denote by $\pi_1\bdy M$ a collection of subgroups of $\pi_1 M$ containing one conjugacy representative of the fundamental group of each boundary component of $M$, indexed over some finite set $I$. Let $(G,\famS)$ be the profinite completion of the pair $(\pi_1 M, \pi_1\bdy M)$.

The following malnormality result is closely related to the results of \cite{WZ14} but does not appear in the precise form we require. We will therefore deduce it from those previous results by a doubling argument.
\begin{prop}\label{MalnBdy}
Suppose $M$ is irreducible, atoroidal and \anan. Then \famS\ is malnormal in $G$.
\end{prop}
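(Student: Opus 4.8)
# Proof proposal for Proposition \ref{MalnBdy}

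The plan is to transfer malnormality from the double $D'M$ to $M$ via the retraction $\rho$, exploiting that the double of an irreducible atoroidal an-annular manifold along its higher-genus boundary is a closed or toroidal-boundary manifold to which the results of \cite{WZ14} directly apply. First I would set $\Gamma = \pi_1 M$, form the double $D'M$ along the higher-genus boundary components, and put $\Gamma' = \pi_1 D'M$, $G' = \widehat{\Gamma'}$, with retraction $\rho\colon G' \to G$ and section $\iota$ as in Proposition \ref{retractionofJSJ}. The boundary subgroups of $M$ split into two types: the tori, which survive into $\bdy D'M$, and the higher-genus ones, which get doubled and become \emph{internal} incompressible surfaces of $D'M$. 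The key geometric input is that since $M$ is atoroidal and an-annular, $D'M$ is atoroidal (any essential torus in $D'M$ would either be isotopic into a copy of $M$---impossible by atoroidality of $M$---or cross the doubling surface, forcing an essential annulus in $M$ with boundary on a higher-genus component, contradicting an-annularity). Hence $D'M$ is itself irreducible and atoroidal with toroidal (possibly empty) boundary, so $\widehat{\pi_1\bdy D'M}$ is malnormal in $G'$ by the corresponding result for toroidal/closed manifolds in \cite{WZ14} (applied, say, after Dehn filling or directly via the hyperbolic-piece malnormality statements there).

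Next I would show that the family of closed boundary subgroups $\famS'$ of $G'$, \emph{together with} the closures of the doubling surface subgroups, forms a malnormal-type family in $G'$; in fact the doubling surface groups $\widehat{\pi_1 \bdy M_i}$ (higher genus) are malnormal in $G'$ because they are the edge groups of the $\Z/2$-splitting $G' = G \ast_{\widehat{\pi_1\bdy M}'} G$ (an acylindrical, indeed malnormal, amalgam over these surface subgroups by the reducedness/efficiency established in Section 2, or directly because an essential surface of genus $\geq 2$ in an irreducible atoroidal manifold is acylindrical and the associated profinite tree action is 1-acylindrical). Then for each boundary subgroup $S_x$ of $G$ I would use $\iota$ to regard $\widehat{\pi_1\bdy M_x}$ as a subgroup of $G'$: if $x$ indexes a torus this lands inside $\famS'$ directly; if $x$ indexes a higher-genus component then $\widehat{\pi_1\bdy M_x} = \widehat{\Gamma}_{\iota(x)} \cap (\text{doubling surface subgroup})$ sits inside the doubling surface group for that component. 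Malnormality of \famS\ in $G$ then follows by a conjugation-chasing argument: given $g \in G$ with $S_x \cap g S_y g^{-1} \neq 1$, lift to $\tilde g \in G'$ with $\rho(\tilde g) = g$, use that $\rho$ restricted to $\widehat{\Gamma}$ is the identity so the intersection of the lifted subgroups in $G'$ maps onto the intersection in $G$, apply malnormality in $G'$ of the appropriate (boundary or doubling-surface) family to conclude $x = y$ and $\tilde g$ conjugates appropriately, then push back down through $\rho$ to get $g \in S_x$.

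The main obstacle I anticipate is handling the two \emph{mixed} cases cleanly: (i) a torus boundary subgroup intersecting a conjugate of a higher-genus boundary subgroup, and (ii) a higher-genus boundary subgroup intersecting a conjugate of itself or of another higher-genus one, where the relevant nontrivial intersection in $G'$ lives in the doubling surface group rather than in $\famS'$. For (i) one must argue that a torus subgroup and a (genus $\geq 2$) surface subgroup in $G'$ cannot share a nontrivial subgroup except trivially---this should follow because such a common subgroup would be abelian, hence (by atoroidality of $D'M$) procyclic and peripheral, and an annulus cannot simultaneously be parallel into a torus boundary and into the doubling surface of $M$ without contradicting an-annularity of $M$; profinitely this is the statement that the profinite torus and the profinite surface group intersect trivially in $G'$, provable via acylindricity of the JSJ action of $G'$. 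For (ii) the point is that two distinct conjugates of the doubling surface group, or a conjugate of one doubling surface group and of another, meet trivially---this is exactly malnormality of the edge-group family of the $\Z/2$-amalgam $G' = G \ast G$, which follows from its acylindricity together with the fact that these surface subgroups are their own normalizers (no essential annuli joining the doubling surfaces to themselves or each other, by an-annularity of $M$). Once these intersection-triviality facts in $G'$ are in hand, the descent through $\rho$ is a routine diagram chase; I would expect to spend most of the writeup carefully verifying that $D'M$ inherits atoroidality and that the doubling surfaces give a malnormal (not merely acylindrical) profinite family, reducing everything to a clean application of \cite{WZ14} and Proposition \ref{retractionofJSJ}.
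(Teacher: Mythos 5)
Your overall strategy---double along the higher-genus boundary, observe that atoroidality plus \anan{}ity of $M$ makes $D'M$ atoroidal with toroidal boundary, and then pull malnormality back through the retraction---matches the paper's skeleton, and your case division (torus/torus, torus/higher-genus, higher-genus/higher-genus) mirrors the paper's Cases 1--4. The torus/torus case via Lemma 4.5 of \cite{WZ14} and the descent through $\rho$ are fine.

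The genuine gap is in your treatment of the higher-genus boundary subgroups. You assert that the doubled surface subgroups form a malnormal family in $G'=\widehat{\pi_1 D'M}$ ``because they are the edge groups of the $\Z/2$-splitting $G'=G\ast_{\widehat{\pi_1\bdy M}}\widetilde G$, an acylindrical, indeed malnormal, amalgam.'' This is circular: $1$-acylindricity of the action of $G'$ on the \emph{profinite} Bass--Serre tree of that splitting is precisely the statement that distinct conjugates of the edge groups intersect trivially, i.e.\ it is equivalent to the malnormality you are trying to prove, not a source of it. Acylindricity of the discrete amalgam (which does follow from \anan{}ity of $M$) does \emph{not} pass automatically to the profinite completion---this failure is exactly why the problem is nontrivial. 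The indispensable external input, which your proposal never invokes, is Theorem 4.2 of \cite{WZ14}: because $\pi_1 M$ is virtually compact special \cite{Wise11} and hyperbolic relative to its toral peripheral subgroups, and because \anan{}ity makes $\pi_1\bdy M$ a malnormal collection in the \emph{discrete} group, that theorem upgrades discrete malnormality of a higher-genus boundary subgroup $S_i$ to malnormality in $G$ relative to the abelian peripherals; the paper then removes the ``relative'' caveat using its Case 2 (the mixed torus/higher-genus case, handled by the stable-letter trick in $\widehat{\pi_1 N}$), and reduces $i\neq j$ to $i=j$ with another stable-letter computation. A similar over-reach appears in your mixed case, where you assume every abelian subgroup of $G'$ is procyclic-and-peripheral ``by atoroidality of $D'M$''; that profinite statement is again a theorem of \cite{WZ14}, not a formal consequence of atoroidality of the manifold. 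Without citing the virtually-special/relatively-hyperbolic machinery at the point where a higher-genus $S_i$ meets a conjugate of itself, the argument does not close.
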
 
\begin{proof}
Let $N$ be the 3-manifold obtained by doubling $M$ along those boundary components which are not tori. By a standard topological argument the atoroidality and \anan{}ity of $M$ imply that $N$ is atoroidal. Then $N$ is an atoroidal irreducible Haken 3-manifold (and is not the orientable $I$-bundle over a Klein bottle) and is therefore cusped-hyperbolic by Thurston's hyperbolisation theorem \cite{Thurston86}. Furthermore $M$ (or rather its interior) is infinite-volume hyperbolic by the same theorem. Furthermore the decomposition of $\pi_1 N$ along the former boundary components of $M$ is efficient, as may be proved using the retraction $\pi_1 N\to \pi_1 M$. 

We shall denote by $\widetilde M$ the second copy of $M$ contained in $N$ and use tildes to denote the canonical isomorphism from $\pi_1 M$ to $\pi_1 \widetilde M$. Similarly for their profinite completions, boundary components et cetera. The graph of groups decomposition of $N$ consists of two vertices and several edges between them. Choose one edge to give a maximal subtree and let $t_j$ denote the stable letter for the HNN extension along the edge $e$ corresponding to the boundary component indexed by $j$ (with the understanding that the stable letter for the chosen edge $e$ is the identity). Hence in $H=\widehat{\pi_1 N}$ conjugation by $t_i$ gives the standard isomorphism $S_i\to\widetilde{S}_i$ for the non-abelian $S_i$.

Let $i,j\in I$ and suppose that $S_i\cap S_j^g\neq 1$ for some $g\in G$. We must show that $i=j$ and $g\in S_i$. We now break into cases depending on whether $S_i$ and $S_j$ come from toroidal boundary components of $M$ or not.
\begin{enumerate}[{Case} 1]
\item If $S_i$ and $S_j$ are abelian, then they persist as peripheral subgroups of $\widehat{\pi_1 N}$. Lemma 4.5 of \cite{WZ14} informs us that the profinite completions of the fundamental groups of the remaining toroidal boundary components of $N$ form a malnormal collection in $\widehat{\pi_1 N}$, so we are done in this case.
\item Suppose $S_j$ is abelian and $S_i$ is not, and let $S_i\cap S_j^g= A\neq 1$. By symmetry we have $\widetilde S_i \cap \widetilde S_j^{\tilde g} = \widetilde A$. Hence $S_j^{t_i}\cap \widetilde{S}_j = \widetilde A\neq 1$, which is impossible as $S_j$ and $\widetilde S_j$ are distinct peripheral subgroups of $H$ (again using Lemma 4.5 of \cite{WZ14}).
\item Suppose $i=j$ and $S_i$ is non-abelian. Since $\pi_1 M$ is virtually compact special (Theorem 14.29 and paragraph before Corollary 14.33 of \cite{Wise11}) and is hyperbolic relative to its toral peripheral subgroups, and since $M$ is \anan\ so that $\pi_1 \bdy M$ is a malnormal collection we may apply Theorem 4.2 of \cite{WZ14} to conclude that $S_i$ is malnormal relative to the abelian groups in \famS. That is, if $S_i\cap S_i^g$ is not conjugate into an abelian group of \famS\ then $g\in S_i$. However by Case 2 this intersection meets any conjugate of an abelian group trivially so $S_i$ is in fact absolutely malnormal.
\item Suppose $i\neq j$ and that $S_i$ and $S_j$ are non-abelian. Without loss of generality $t_j\neq 1$. If $S_i\cap S_j^g= A\neq 1$ then also $\widetilde S_i\cap \widetilde S_j^{\tilde g}= \widetilde A\neq 1$. It easily follows that $S_j\cap S_j{}^\wedge \{t_j\tilde g t_i^{-1} g^{-1}\}\neq 1$. Since $t_j\tilde g t_i^{-1} g^{-1}\notin S_j$ (consider the homomorphism to $\widehat\Z$ corresponding to the stable letter $t_j$) this contradicts Case 3 and we are done.\qedhere
\end{enumerate}
\end{proof}
Combining with Proposition \ref{MalnImpliesAnan} gives the following corollary.
\begin{clly}\label{ProfAnan}
Suppose $M$ is irreducible, atoroidal and \anan. Then $(G,\famS)$ is strictly atoroidal and \anan.
\end{clly}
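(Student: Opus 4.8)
The plan is to deduce the corollary from Proposition~\ref{MalnBdy}, supplemented by one further input --- the profinite atoroidality of a cusped hyperbolic $3$-manifold --- which I would transport back to $M$ through the double $N=D'M$ already introduced in the proof of Proposition~\ref{MalnBdy}.

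The \anan{}ity of $(G,\famS)$ is immediate: Proposition~\ref{MalnBdy} gives that $\famS$ is malnormal in $G$, and Proposition~\ref{MalnImpliesAnan} then applies. Malnormality of $\famS$ also entails malnormality of the subfamily of its abelian members, which is one of the two defining conditions of strict atoroidality; so the only thing left to verify is \emph{atoroidality of the pair}: that every closed abelian subgroup of $G$ is procyclic or conjugate into an abelian member of $\famS$.

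For this I would work with the double $N=D'M$ along the higher-genus boundary components of $M$. As established in the proof of Proposition~\ref{MalnBdy}, $N$ is irreducible, atoroidal, Haken, with purely toral boundary, and is not the orientable $I$-bundle over the Klein bottle, hence cusped hyperbolic; moreover the splitting of $\pi_1 N$ along the former boundary surfaces of $M$ is efficient. Write $H=\widehat{\pi_1 N}$. Since the inclusion $\pi_1 M\hookrightarrow\pi_1 N$ and the folding retraction $N\to M$ compose to the identity, their profinite completions provide an embedding $G\hookrightarrow H$ and a continuous retraction $\rho\colon H\to G$ with $\rho|_G=\id$; and (by Theorem~\ref{FullSepBdy} together with efficiency) the peripheral subgroups of $N$ are exactly the $H$-conjugates of the closures of the torus boundary groups $S_i$ of $M$ and of their mirror images $\widetilde S_i$ in the second copy of $M$. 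Now let $A\leq G$ be a closed abelian subgroup that is not procyclic. Regarding $A$ as a subgroup of $H$ and invoking the atoroidality of $\widehat{\pi_1 N}$ relative to its peripheral structure --- the companion, for abelian subgroups, of the cusp-malnormality statement \cite[Lemma~4.5]{WZ14} used in the proof of Proposition~\ref{MalnBdy} --- we obtain that $A$ is conjugate in $H$ into one such peripheral subgroup $Q$. Applying $\rho$ and using $\rho|_G=\id$ gives $\rho(h)A\rho(h)^{-1}\subseteq\rho(Q)$ with $\rho(h)\in G$; since $\rho(S_i)=S_i$ and $\rho(\widetilde S_i)$ is a $G$-conjugate of $S_i$ (the folding map sends the corresponding boundary torus of the second copy homeomorphically onto $T_i$), $\rho(Q)$ is conjugate in $G$ to the abelian member $S_i$ of $\famS$, so $A$ is conjugate in $G$ into $S_i$. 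This gives atoroidality, and together with the first two observations shows $(G,\famS)$ is strictly atoroidal and \anan.

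The main obstacle is the atoroidality step, and within it the need for the profinite atoroidality of a cusped hyperbolic $3$-manifold as a black box (every closed abelian subgroup of $\widehat{\pi_1 N}$ is procyclic or conjugate into a cusp subgroup). The remaining work --- constructing the embedding $G\hookrightarrow H$, identifying the peripheral structure of $N$ via efficiency, and checking that $\rho$ carries the cusps of $N$, in particular the mirror cusps $\widetilde S_i$, into abelian members of $\famS$ --- is routine but must be done carefully, since a naive application could lose track of which peripheral subgroup of $N$ the retraction lands in.
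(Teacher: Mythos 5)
Your proposal is correct and follows essentially the same route as the paper: an-annularity and malnormality of the abelian members come from Proposition~\ref{MalnBdy} together with Proposition~\ref{MalnImpliesAnan}, and atoroidality of the pair is pulled back from the cusped hyperbolic double $D'M$ via Lemma~4.5 of \cite{WZ14} and the retraction, exactly as in the paper's (much terser) proof. Your write-up merely makes explicit the bookkeeping of peripheral subgroups under $\rho$ that the paper leaves implicit.
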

\begin{proof}
We have now proven everything except the atoroidality condition; this follows from Lemma 4.5 of \cite{WZ14} via the same doubling as in Proposition \ref{MalnBdy}.
\end{proof}

\section{Main theorems}
\begin{theorem}\label{MainAnanThm}
Let $M$ be a compact irreducible orientable 3-manifold with incompressible boundary. Assume that $M$ is atoroidal and \anan. If $G=\widehat{\pi_1 M}$ acts on a profinite tree with abelian edge stabilisers then $G$ fixes a unique vertex. 
\end{theorem}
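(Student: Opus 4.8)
The plan is to recognise the profinite group pair $(G,\famS)$ --- where $\famS$ is the family of closures in $G=\widehat{\pi_1 M}$ of the peripheral subgroups $\pi_1\partial M_i$ of $M$ --- as a strictly atoroidal, \anan\ PD$^3$ pair, to feed it into Theorem \ref{AnanFixVertex} to obtain a fixed vertex, and then to upgrade ``a fixed vertex'' to ``a unique fixed vertex'' using the cohomological dimension bound on edge stabilisers that is extracted in the proof of Theorem \ref{AnanFixVertex}, together with the fact that a PD$^3$ pair has $\cd_p=3$ at every prime.

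First I would dispose of the closed case: if $\partial M=\emptyset$ then $M$ is closed and the conclusion is already part of the analysis in \cite{WZ10, WZ17}, so henceforth assume $\partial M\neq\emptyset$. Then $M$ is irreducible with non-empty incompressible boundary (which by our conventions has no spherical components), so $\pi_1 M$ is infinite and $M$ is aspherical. Hence Theorem \ref{ProfPD3} applies and shows that $(G,\famS)$ is a PD$^3$ pair at every prime $p$; in particular $\cd_p(G,\famS)=3$ for all $p$ (see \cite[Section 5]{WilkesRelCoh}). Since $M$ is also irreducible, atoroidal and \anan, Corollary \ref{ProfAnan} shows that $(G,\famS)$ is strictly atoroidal and \anan. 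All the hypotheses of Theorem \ref{AnanFixVertex} therefore hold with $n=3$, and we conclude that any action of $G$ on a profinite tree $T$ with abelian edge stabilisers fixes at least one vertex.

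It remains to establish uniqueness. Examining the proof of Theorem \ref{AnanFixVertex}, one sees that it in fact shows --- using only strict atoroidality, \anan{}ity and Lemmas \ref{LemReductionOfFamily} and \ref{nomobiusbands} --- that for \emph{every} edge $e$ of $T$ one has $\cd_p(G_e,\famS^{G_e})<2$ for all $p$. Suppose for contradiction that $G$ fixed two distinct vertices of $T$. Then $G$ fixes an edge $e$ of $T$, since the fixed locus of $G$ in the profinite tree $T$ is a profinite subtree and this subtree has more than one vertex. Thus $G=G_e$, and choosing the identity section in the definition of $\famS^G$ identifies it with $\famS$; hence $\cd_p(G,\famS)=\cd_p(G_e,\famS^{G_e})<2$ for all $p$, contradicting $\cd_p(G,\famS)=3$. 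Therefore the fixed vertex is unique.

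Since the substantial work is already packaged in Theorems \ref{ProfPD3} and \ref{AnanFixVertex} and Corollary \ref{ProfAnan}, I do not expect a genuinely hard step; the points requiring care all lie in the uniqueness argument. These are (i) verifying that the edge-stabiliser bound from the proof of Theorem \ref{AnanFixVertex} really does apply to the configuration $G_e=G$ --- equivalently, that $\famS^G$ coincides, up to the conjugation to which relative cohomology is insensitive, with $\famS$ itself, so that the bound collides with the PD$^3$ property rather than being vacuous --- and (ii) the facts about profinite trees being invoked, principally that the fixed locus of a profinite group acting on a profinite tree is a profinite subtree. One should also note that the ``incompressible boundary'' hypothesis is implicitly excluding certain degenerate closed manifolds, notably spherical space forms, for which $(G,\famS)$ is not a PD$^3$ pair.
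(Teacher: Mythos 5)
Your proof takes essentially the same route as the paper, whose entire proof is a one-line citation of Theorem \ref{ProfPD3}, Corollary \ref{ProfAnan} and Theorem \ref{AnanFixVertex}; your unpacking of those citations is accurate. The uniqueness step, which the paper leaves implicit, is also correct as you give it (the fixed locus is a profinite subtree, so two fixed vertices yield a fixed edge $e$ with $G=G_e$ and $\cd_p(G_e,\famS^{G_e})<2$ colliding with $\cd_p(G,\famS)=3$), though it can be short-circuited: a fixed edge would make $G$ abelian, which is already absurd for such an $M$.
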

\begin{proof}
This follows from Theorem \ref{AnanFixVertex} and Corollary \ref{ProfAnan} together with Theorem \ref{ProfPD3}.
\end{proof}
\begin{rmk}
This is our replacement and improvement for Lemmas 3.4 and 4.4 of \cite{WZ17}. It is an improvement in two senses: it extends the result to more general boundaries than toral, and removes the assumption that the action is acylindrical.
\end{rmk}
The following definition will be useful for our purposes.
\begin{defn}
Let $(X, G_\bullet)$ and $(Y,H_\bullet)$ be graphs of profinite groups with fundamental groups $G$ and $H$ respectively. A {\em preservation of decompositions} $(f,\Phi)\colon (X, G_\bullet)\to(Y,H_\bullet)$ is a pair of maps where $f\colon X \to Y$ is a graph isomorphism and $\Phi\colon G\to H$ is an isomorphism of profinite groups such that $\Phi(G_x)$ is a conjugate of $H_{f(x)}$ for all $x\in X$.
\end{defn}
\begin{theorem}[= Theorem B of \cite{WZ17}]\label{JSJtorbdy}
Let $M$ and $N$ be irreducible orientable 3-manifolds with toroidal boundary and let their respective JSJ decompositions be $(X,M_\bullet)$ and $(Y,N_\bullet)$. Suppose there exists a weak isomorphism of pairs $\Phi\colon \widehat{\pi_1 M}\to \widehat{\pi_1 N}$. Then there exists a graph isomorphism $f\colon X\to Y$ such that \[(f,\Phi)\colon (X,\widehat{\pi_1 M_x}) \to (Y,\widehat{\pi_1 N_x})\] is a preservation of decompositions.
\end{theorem}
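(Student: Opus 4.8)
The plan is to follow the strategy of Wilton and Zalesskii for Theorem~B of \cite{WZ17}, but to substitute Theorem~\ref{MainAnanThm} for their ad hoc treatment of the hyperbolic JSJ pieces. Write $G=\widehat{\pi_1 M}$ and $H=\widehat{\pi_1 N}$. By the efficiency of JSJ decompositions in the toroidal-boundary case together with Theorem~\ref{EffGivesInjective}, the JSJ decompositions of $M$ and $N$ give injective profinite graphs of groups $(X,\widehat{\pi_1 M_\bullet})$ and $(Y,\widehat{\pi_1 N_\bullet})$ with $G=\Pi_1(X,\widehat{\pi_1 M_\bullet})$ and $H=\Pi_1(Y,\widehat{\pi_1 N_\bullet})$; let $S$ and $T$ be their standard profinite trees, on which $G$ and $H$ act acylindrically with abelian edge stabilisers (conjugates of the toroidal JSJ edge groups $\widehat{\Z}^2$). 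Using the isomorphism $\Phi$ we regard $G$ as acting on $T$. Since $\Phi$ is a weak isomorphism of pairs, it carries the peripheral $\widehat{\Z}^2$-subgroups of $G$, which are elliptic in $S$ because they sit inside JSJ vertex groups, to conjugates of peripheral subgroups of $H$, so they remain elliptic in $T$.

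The first substantive step is to show that every JSJ vertex group $\widehat{\pi_1 M_x}$ fixes a vertex of $T$, and symmetrically that every $\widehat{\pi_1 N_y}$ fixes a vertex of $S$. The edge stabilisers of the induced action of $\widehat{\pi_1 M_x}$ on $T$ lie inside edge stabilisers of the $H$-action, hence are abelian. If $M_x$ is one of the simple atoroidal pieces then, because $\partial M$ is toroidal, $M_x$ has toroidal boundary, and a standard cut-and-paste argument shows it is also \anan; thus Corollary~\ref{ProfAnan} and Theorem~\ref{MainAnanThm} apply and $\widehat{\pi_1 M_x}$ fixes a unique vertex of $T$. If $M_x$ is Seifert fibred then $\widehat{\pi_1 M_x}$ contains a normal procyclic fibre subgroup, and one deduces ellipticity from normality together with acylindricity, treating the finitely many small base orbifolds as in \cite{WZ17}. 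The degenerate cases in which $M$ or $N$ has trivial JSJ graph are dealt with first: if $M$ is atoroidal then Theorem~\ref{MainAnanThm} already shows $G$ fixes a vertex of $T$, so by minimality $T$ is a point and $N$ has trivial JSJ as well (and is hyperbolic rather than Seifert by detection of the centre), and the case of $M$ Seifert is symmetric.

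Given that all vertex stabilisers of the $G$-action on $S$ are elliptic in $T$, the theory of profinite graphs of groups of \cite{Ribes17} produces a $G$-equivariant morphism of profinite trees $\phi\colon S\to T$; applying the same reasoning to $\Phi^{-1}$ gives a $G$-equivariant morphism $\psi\colon T\to S$. The composites $\psi\circ\phi$ and $\phi\circ\psi$ are $G$-equivariant self-morphisms, and using minimality of the standard trees and acylindricity one shows that $\phi$ and $\psi$ collapse no edges, so that $\phi$ is a $G$-isomorphism $S\to T$. Taking quotients by the $G$-action yields a graph isomorphism $f\colon X=G\backslash S\to H\backslash T=Y$, and because $\phi$ identifies vertex stabilisers, $\Phi(\widehat{\pi_1 M_x})$ is a conjugate of $\widehat{\pi_1 N_{f(x)}}$ for every $x\in X$. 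Hence $(f,\Phi)$ is a preservation of decompositions.

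The main obstacle is the profinite-tree input to the third paragraph: producing the equivariant morphisms from the ellipticity data, and above all the rigidity statement that two minimal acylindrical actions of the same profinite group on profinite trees with the same family of elliptic subgroups must be isomorphic. Over discrete groups this is standard Bass--Serre and folding theory, but the profinite analogue requires the care of \cite{Ribes17}, since minimal subtrees, folds and collapses must all be shown to behave correctly for profinite trees. A secondary difficulty is the clean handling of the Seifert-fibred pieces and the small-JSJ base cases, where Theorem~\ref{MainAnanThm} does not apply directly and one must fall back on the normal fibre subgroup together with acylindricity.
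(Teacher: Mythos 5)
Your overall strategy coincides with the route the paper itself indicates: this is Theorem B of \cite{WZ17}, and the paper's proof is deliberately only a sketch, saying to run the argument of Section 4 of \cite{WZ17} with Theorem \ref{MainAnanThm} substituted for the original treatment of the hyperbolic pieces. Your first two paragraphs (efficiency, standard trees, ellipticity of hyperbolic vertex groups via Corollary \ref{ProfAnan} and Theorem \ref{MainAnanThm}, ellipticity of Seifert-fibred vertex groups via the normal fibre subgroup and acylindricity, and the degenerate base cases) are exactly that argument and are fine.

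The genuine gap is in your third paragraph. You assert that the ellipticity of all vertex stabilisers yields a $G$-equivariant morphism of profinite trees $\phi\colon S\to T$, citing \cite{Ribes17}, and then propose to conclude by a folding/rigidity argument. No such existence result is available: in the profinite category a morphism of graphs must send edges to edges or to vertices, and the discrete-group construction (send each vertex to a fixed point and each edge to the geodesic between the images of its endpoints) fails because the smallest subtree $[v,w]$ joining two vertices of a profinite tree is in general an infinite profinite subtree, not an edge-path; moreover a vertex-by-vertex choice of fixed points need not assemble into a continuous map on the profinite vertex space. Likewise there is no off-the-shelf profinite folding theory or rigidity statement for minimal acylindrical actions with prescribed elliptic subgroups. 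The argument of \cite{WZ17} (which is what the paper intends you to reproduce) avoids constructing any map of trees: one uses ellipticity in \emph{both} directions to obtain chains of containments $\Phi(\widehat{\pi_1 M_x})\leq h\widehat{\pi_1 N_y}h^{-1}\leq hg\,\Phi(\widehat{\pi_1 M_{x'}})\,(hg)^{-1}$, then uses acylindricity together with the fact that a JSJ vertex group cannot be conjugate into a proper conjugate of a vertex group to force $x=x'$ and equality throughout. This matches vertex groups bijectively up to conjugacy, giving $f$ on vertices; the edges and the incidence structure of $X$ and $Y$ are then recovered from the pattern of non-trivial intersections of (conjugates of) adjacent vertex groups, using acylindricity again to see that these intersections are exactly the edge groups. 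Replacing your third paragraph with this containment-chasing argument closes the gap; as written, the step producing $\phi$ and $\psi$ would fail.
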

\begin{proof}
As this theorem is already known we shall only sketch the proof. First recall that in the case of toroidal boundary one only has tori in the JSJ decomposition and therefore all pieces are Seifert fibred or cusped hyperbolic. One may either follow the proof as given in Section 4 of \cite{WZ17} and substitute Theorem \ref{MainAnanThm} to handle hyperbolic pieces; or one may use Theorem 6.2 of \cite{Wilkes16} to detect the Seifert-fibred portions of the JSJ graph, apply Theorem \ref{MainAnanThm} to detect hyperbolic pieces and then locate the remaining edge groups via the intersections of these hyperbolic pieces with each other and with Seifert fibred pieces. 
\end{proof}

\begin{theorem}\label{JSJgeneral}
Let $M$ and $N$ be compact irreducible orientable 3-manifolds with incompressible boundary and let their respective JSJ decompositions be $(X,M_\bullet)$ and $(Y,N_\bullet)$. Let $(\pi_1\bdy M)_{\rm hg}$ and $(\pi_1\bdy N)_{\rm hg}$ denote the families of peripheral subgroups corresponding to (one conjugate of) the fundamental group of each higher-genus boundary component, and let $\famS_{\rm hg}$ and ${\cal T}_{\rm hg}$ be their profinite completions. Suppose there exists an isomorphism of group pairs $\Phi\colon (\widehat{\pi_1 M}, \famS_{\rm hg}) \to (\widehat{\pi_1 N}, {\cal T}_{\rm hg})$. Then there exists a graph isomorphism $f\colon X\to Y$ such that 
\[(f,\Phi)\colon (X,\widehat{\pi_1 M_x}) \to (Y,\widehat{\pi_1 N_x})\]
 is a preservation of decompositions.
\end{theorem}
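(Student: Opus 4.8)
The plan is to reduce to the toroidal‑boundary case, Theorem \ref{JSJtorbdy}, by the doubling construction of Section \ref{subsecJSJbackgd} and Proposition \ref{retractionofJSJ}. Let $D'M$ and $D'N$ be the doubles of $M$ and $N$ along their higher‑genus (i.e.\ non‑toroidal) boundary components; these are again compact irreducible orientable 3‑manifolds, now with toroidal boundary, so Theorem \ref{JSJtorbdy} applies to them. (Note that the atoroidal JSJ pieces of $M$ itself may fail to be \anan, so Theorem \ref{MainAnanThm} cannot be applied to them directly; doubling removes this difficulty, since the atoroidal pieces of a manifold with toroidal boundary are automatically \anan.) Each of $D'M$, $D'N$ carries two decompositions we shall use. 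On one hand, $\pi_1 D'M$ is the fundamental group of an efficient \emph{doubling} graph of groups with two vertices --- two copies of $\pi_1 M$ --- and one edge, carrying the appropriate peripheral subgroup, for each higher‑genus boundary component; by Theorem \ref{EffGivesInjective}, $\widehat{\pi_1 D'M}$ is the fundamental group of the associated profinite graph of groups, and we have the folding retraction $\rho_M\colon\widehat{\pi_1 D'M}\to\widehat{\pi_1 M}$ and the swap involution $\tau_M$ of Proposition \ref{retractionofJSJ}. On the other hand, $D'M$ having toroidal boundary, it has a JSJ decomposition; write its graph of groups as $(X',\Gamma'_\bullet)$ with $\Gamma'=\pi_1 D'M$. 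The same applies to $N$, with notation $\rho_N$, $\tau_N$, $(Y',\Delta'_\bullet)$, $\Delta'=\pi_1 D'N$.

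First I would promote the given isomorphism of pairs to an isomorphism $\Phi'\colon\widehat{\pi_1 D'M}\to\widehat{\pi_1 D'N}$. Since $\Phi$ carries each higher‑genus peripheral subgroup of $\widehat{\pi_1 M}$ to a conjugate of one for $\widehat{\pi_1 N}$, via a bijection of the corresponding boundary components, it provides exactly the data of an isomorphism of the two doubling graphs of profinite groups --- apply $\Phi$ on each vertex copy and the bijection on edges, absorbing the conjugating elements into the connecting elements and stable‑letter images --- and hence an isomorphism $\Phi'$ of their fundamental groups. Taking $\Phi'$ to act by $\Phi$ on \emph{both} vertex copies makes it commute with the swap involutions, $\Phi'\circ\tau_M=\tau_N\circ\Phi'$; and since $\rho_M$, $\rho_N$ restrict to the identity on each vertex copy and kill the stable letters, while each conjugating element $h$ enters $\Phi'$ symmetrically in the two vertex groups (and inside stable letters in the shape $h\,s\,h^{-1}$), its net contribution under $\rho_N$ is trivial, so that $\rho_N\circ\Phi'=\Phi\circ\rho_M$ (up to an inner automorphism, which is harmless below). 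To apply Theorem \ref{JSJtorbdy} we also need $\Phi$, and hence $\Phi'$, to respect the toroidal peripheral subgroups; this should follow from the fact that, up to conjugacy, the toroidal peripherals are exactly the subgroups one must adjoin to the higher‑genus family to obtain a PD$^3$ pair (Theorem \ref{ProfPD3}), an intrinsic feature carried along by $\Phi$. Granting this, $\Phi'$ is a weak isomorphism of pairs for the toroidal peripheral structures of $D'M$ and $D'N$.

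Applying Theorem \ref{JSJtorbdy} to $\Phi'$ yields a graph isomorphism $f'\colon X'\to Y'$ with $(f',\Phi')$ a preservation of decompositions. The main obstacle is that we must arrange $f'$ to be $\tau$‑equivariant, i.e.\ to intertwine $\tau_M$ and $\tau_N$ on $X'$ and $Y'$; since $f'$ need not be unique this requires justification. I would derive it from the canonicity underlying Theorem \ref{JSJtorbdy}: by Theorem \ref{MainAnanThm} the \anan\ atoroidal JSJ vertices of $D'M$ are exactly those whose vertex groups admit no nontrivial action on a profinite tree with abelian edge stabilisers --- a property preserved by the isomorphism $\Phi'$ --- and the $\Phi'$‑image of such a vertex group fixes a unique vertex of the profinite Bass--Serre tree of the JSJ of $D'N$, giving a canonical correspondence of these vertex orbits which intertwines $\tau_M$ and $\tau_N$ since $\Phi'\circ\tau_M=\tau_N\circ\Phi'$; together with the detection of the Seifert‑fibred vertices (Theorem 6.2 of \cite{Wilkes16}) and the adjacency structure of the graph, this pins down $f'$, or at least allows it to be replaced by a $\tau$‑equivariant choice. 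A $\tau$‑equivariant $f'$ then descends to a graph isomorphism $f\colon X=X'/\gp{\tau_M}\to Y'/\gp{\tau_N}=Y$. Finally, Proposition \ref{retractionofJSJ}, applied to both $M$ and $N$, gives $\widehat{\pi_1 M_x}=\rho_M(\widehat\Gamma'_{\iota(x)})$ for any lift $\iota(x)\in X'$ of $x\in X$, and $\rho_N(\widehat\Delta'_{y'})=\widehat{\pi_1 N_{\overline{y'}}}$ for $y'\in Y'$ with image $\overline{y'}\in Y$; so, using $\rho_N\circ\Phi'=\Phi\circ\rho_M$ and that $(f',\Phi')$ preserves decompositions,
\[ \Phi(\widehat{\pi_1 M_x}) = \rho_N\!\bigl(\Phi'(\widehat\Gamma'_{\iota(x)})\bigr) = \rho_N\!\bigl(g\,\widehat\Delta'_{f'(\iota(x))}\,g^{-1}\bigr) = \rho_N(g)\,\widehat{\pi_1 N_{f(x)}}\,\rho_N(g)^{-1} \]
for a suitable $g\in\widehat{\pi_1 D'N}$. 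Hence $(f,\Phi)$ is a preservation of decompositions, as required. The steps that will demand genuine care, rather than routine bookkeeping, are the $\tau$‑equivariance of $f'$ and the observation that the toroidal peripheral structure is already forced on $\Phi$.
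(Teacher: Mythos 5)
Your proposal is correct and follows essentially the same route as the paper: double along the higher-genus boundary, extend $\Phi$ to an isomorphism of the doubles commuting with the retractions and the swap involutions, apply Theorem \ref{JSJtorbdy}, and descend via Proposition \ref{retractionofJSJ}. The two points you flag as delicate are exactly where the paper's proof does its work — the $\tau$-equivariance of $f'$ is what the paper's commuting diagram of preservations of decompositions (obtained by applying Theorem \ref{JSJtorbdy} to both $\Psi$ and $\tau$) encodes — while your worry about the toroidal peripheral structure is moot, since Theorem \ref{JSJtorbdy} needs only an isomorphism of the ambient profinite groups.
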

\begin{proof}
Let $G= \widehat{\pi_1 M}$ and $H=\widehat{\pi_1 N}$. Let $\famS_{\rm hg}=\{P_1,\ldots, P_n\}$ and ${\cal T}_{\rm hg}=\{Q_1,\ldots, Q_n\}$, indexed so that $\Phi(P_i) = Q_i^{g_i}$ for some $g_i\in G$.

Take copies $\widetilde G$ and $\widetilde H$ of $G$ and $H$. We will use tildes $\widetilde{(\,\cdot\,)}$ to denote the translation of an element or map on $G$ or $H$ to the copy $\widetilde G$ or $\widetilde H$. Form the high-genus double $D'G$ of $G$ as the fundamental group of the natural graph of groups with vertex groups $G$ and $\widetilde G$ with respect to a maximal subtree consisting of the edge with edge group $P_1$. Let the stable letter for another edge group $P_i$ be $s_i$. That is, in $D'G$ we have $\widetilde P_i = P_i^{s_i}$. Similarly from $D'H$ with respect to the subtree with edge labelled by $Q_1$ and let the stable letter for an edge group $Q_i$ be $t_i$. 

The map $\Phi$ induces an isomorphism $\Psi=D'\Phi\colon D'G \to D'H$ defined by 
\[\Psi(g) = \Phi(g)\text{ for $g\in G$}, \quad \Psi(\tilde g) = \widetilde\Phi(\tilde g)^{g_1}\text{ for $\tilde g\in \widetilde G$},\quad s_i \mapsto g_i^{-1} t_i \tilde g_i  \]
The reader may readily check that this is a well-defined isomorphism of graphs of groups. Note that if $\rho_G$ denotes the canonical retraction $D'G\to G$ and $\rho_H$ denotes the canonical retraction $D'H\to H$ then there is a commuting diagram
\[\begin{tikzcd}
D'G \ar{r}{\Psi}[swap]{\iso} \ar{d}{\rho_G}& D'H \ar{d}{\rho_H} \\
G \ar{r}{\Phi}[swap]{\iso} & H
\end{tikzcd}\]  
Furthermore if $\tau$ denotes the action of $\Z/2$ on a double by swapping the two copies of $G$ (or $H$) then $\tau$ commutes with $\Psi$. More precisely $\tau$ is the map which swaps the two copies $g$ and $\tilde g$ of any element of $G$ and sends each $s_i$ to its inverse, and similarly for $H$.

Let the graph of groups decompositions of $D'G$ and $D'H$ corresponding to the JSJ decompositions of $D'M$ and $D'N$ be $(X',G'_\bullet)$ and $(Y',H'_\bullet)$. By Theorem \ref{JSJtorbdy} applied both to $\Psi$ and to $\tau$ there is a commuting diagram of preservations of decompositions 
\[\begin{tikzcd}
(X',  G'_\bullet)\ar{r}{(f, \Psi)}\ar{d}{(\tau, \tau)} & (Y',  H'_\bullet)\ar{d}{(\tau, \tau)}\\
(X',  G'_\bullet)\ar{r}{(f, \Psi)} & (Y',  H'_\bullet)
\end{tikzcd}\]
By Proposition \ref{retractionofJSJ} the graphs of groups decomposition of $G$ given by the JSJ decompositions of $M$ is $(X'/\gp{\tau}, \rho_G(G'_\bullet))$. Similarly the decomposition of $H$ is $(Y'/\gp{\tau}, \rho_H(H'_\bullet))$. The two commutative diagrams above now imply that there is a preservation of decompositions $(f/\gp{\tau}, \Phi)$ from one JSJ decomposition to the other. This concludes the proof.
\end{proof}
\begin{rmk}
The reason that a doubling argument (and hence some constraint upon the isomorphisms with respect to peripheral structure) seems necessary is that the JSJ decompositions may contain $I$-bundles over surfaces-with-boundary. These pieces have free fundamental groups and therefore their actions on (profinite) trees could be quite wild. In particular there is no {\it a priori} reason for them to fix a vertex, so the proof strategy of Theorem \ref{JSJtorbdy} (showing that each JSJ piece fixes a vertex) breaks down.
\end{rmk}

\bibliographystyle{alpha}
\bibliography{decomp}
\end{document}